\numberwithin{equation}{section}
\theoremstyle{definition}
\newtheorem{defi}{Definition}[section]
\newtheorem{property}[defi]{Property}
\newtheorem{ex}[defi]{Example}
\newtheorem{rem}[defi]{Remark}
\theoremstyle{plain}
\newtheorem{thm}[defi]{Theorem}
\newtheorem{prop}[defi]{Proposition}
\newtheorem{lem}[defi]{Lemma}
\newcommand{\C}{\mathcal C}
\newcommand{\D}{\mathcal D}
\newcommand{\I}{\mathbb I}
\newcommand{\J}{\mathbb J}
\newcommand{\T}{\mathsf{Top}_*}
\newcommand{\Smash}{\wedge}
\newcommand{\sm}{\wedge}
\newcommand{\id}{\mathsf{id}}
\renewcommand{\S}{\mathbb{S}}
\newcommand{\bS}{\mathbb{S}}
\newcommand{\bC}{\mathbb{C}}
\newcommand{\SpS}{\mathsf{Sp}^{\mathsf{\Sigma}}}
\newcommand{\Set}{\mathsf{Set}}
\newcommand{\sset}{\mathsf{sSet}}
\newcommand{\Top}{\mathsf{Top}}
\newcommand{\Mod}{\mathsf{Mod}}
\newcommand{\comon}{\mathsf{coMon}}
\newcommand{\Ev}{\mathsf{Ev}}
\newcommand{\ekmm}{\mathsf{Sp}^{\mathsf{EKMM}}}
\newcommand{\ssigma}{\mathsf{\Sigma}}
\newcommand{\W}{\mathcal{W}}
\newcommand{\SP}{\mathsf{SP}}
\newcommand{\DT}{\mathsf{Sp}^{\mathcal{D}}}
\long\def\emptytext#1{}
\begin{document}

\title [Coalgebras in symmetric monoidal categories of spectra]{Coalgebras in symmetric monoidal categories of spectra}
\author {Maximilien P\'eroux}
\address{Department of Mathematics, Statistics and Computer Science, University of Illinois at
Chicago,
851 S. Morgan Street,
Chicago, IL, 60607-7045, USA}
    \email{mholmb2@uic.edu}
    
 \author {Brooke Shipley}
    \email{shipleyb@uic.edu}

\subjclass [2010] {16T15, 18D10, 55P42, 55P43} 
   
\keywords{homotopy, spectrum, coalgebra, comonoid}

\begin{abstract}  
We show that all coalgebras over the sphere spectrum are cocommutative in the category of symmetric spectra, orthogonal spectra, $\Gamma$-spaces, $\mathcal{W}$-spaces and EKMM $\S$-modules. 
{Our result only applies to these strict monoidal categories of spectra and does not apply to the $\infty$-category setting.}
\end{abstract}

\maketitle 

\section{Introduction}

It is well known that the diagonal map of a set, or a space, gives it the structure of a comonoid. In fact,
the only possible (counital) comonoidal structure on an object in a Cartesian symmetric monoidal category is given by the diagonal (see \cite[Example 1.19]{AM}). Thus all comonoids are forced to be cocommutative in these settings. 
We prove that this rigidity is inherited by all of the strict monoidal categories of spectra that have been developed over the last 20 years, including symmetric spectra (see \cite{SS}), orthogonal spectra (see~\cite{MMSS, MM}), $\Gamma$-spaces (see \cite{Segal, BousfieldFrie}), $\mathcal{W}$-spaces (see \cite{Anderson}) and $\S$-modules (see \cite{EKMM}), which we call EKMM-spectra here.  That is, $\S$-coalgebra spectra in any of these categories are cocommutative, where $\S$ denotes the sphere spectrum.

\begin{thm}\label{thm intro: diag spectra coco}
Let $(C, \Delta, \varepsilon)$ be an $\bS$-coalgebra in symmetric spectra, orthogonal spectra, $\Gamma$-spaces, $\mathcal{W}$-spaces or {\upshape EKMM}-spectra.
Then $C$ is a cocommutative $\bS$-coalgebra. 
\end{thm}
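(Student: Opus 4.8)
The plan is to treat the four diagram categories of spectra uniformly and to handle EKMM-spectra separately. I first observe that symmetric spectra, orthogonal spectra, $\Gamma$-spaces and $\mathcal{W}$-spaces are all instances of $\dt$, a category of pointed-space-valued diagrams on a symmetric monoidal indexing category $\mathcal D$, equipped with the Day convolution smash product $\sm$; the sphere spectrum $\bS$ is the monoidal unit, namely the free diagram on $S^0$ sitting over the monoidal unit $0\in\mathcal D$, so that $\bS(0)=S^0$. Cocommutativity of $(C,\Delta,\varepsilon)$ is the single equation $\tau\circ\Delta=\Delta$, where $\tau$ is the symmetry of $C\sm C$, and since maps of diagrams are detected objectwise it suffices to verify this after evaluating at every object of $\mathcal D$.

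The conceptual core is a rigidity statement in $\T$ (equivalently $\Set_*$) itself: if $(D,\Delta,\varepsilon)$ is an $S^0$-coalgebra for the smash product of pointed sets, then the counit axioms force $\varepsilon$ to be the canonical collapse $D\to S^0$ and $\Delta$ to be the diagonal $d\mapsto d\sm d$, which is manifestly cocommutative. I would prove this by a direct element chase: writing $\Delta(d)=[a\sm b]$ in $D\sm D$, the right counit law $(\id\sm\varepsilon)\Delta=\id$ forces $b=d$ and $\varepsilon(a)=1$, while the left counit law forces $a=d$ and $\varepsilon(b)=1$, so $\Delta(d)=d\sm d$. This is precisely the smash-product avatar of the fact (\cite[Example 1.19]{AM}) that only the diagonal is comonoidal in a Cartesian setting, and it is what ultimately produces cocommutativity.

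I would then transport this rigidity into $\dt$ level by level. Evaluation at the monoidal unit, $\Ev_0\colon\dt\to\T$, is strong symmetric monoidal—one verifies this in each model—with $\Ev_0(\bS)=S^0$, because the Day convolution coend at $0$ collapses to the smash of the underlying objects; hence $\Ev_0$ preserves coalgebras and $C(0)$ is an $S^0$-coalgebra, so $\Delta(0)$ is the diagonal and $\varepsilon(0)$ the collapse by the previous paragraph. To reach the higher objects I would run the same counit computation for $\Delta_n$ directly inside $(C\sm C)_n$, using the explicit Day-convolution description together with the $\bS$-module structure maps that implement the unit isomorphisms $C\sm\bS\cong C\cong\bS\sm C$; the counit $\varepsilon$ being a map of diagrams ties $\varepsilon_n$ back to the collapse $\varepsilon_0$ through these structure maps, and this constrains $\Delta_n$ enough to give $\tau_n\Delta_n=\Delta_n$.

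The main obstacle is exactly this last step: the level-$n$ part of $C\sm C$ is a wedge of summands $\Sigma_n\ltimes_{\Sigma_p\times\Sigma_q}(C_p\sm C_q)$ carrying nontrivial symmetric-group actions that $\tau$ permutes, so the clean element chase available at the unit must be upgraded to track these actions and the mixing of degrees $p+q=n$; verifying that the counit laws still pin $\Delta_n$ down to a $\tau$-invariant form is the technical heart. Finally, EKMM-spectra are not a diagram category and their smash $\sm_{\bS}$ is built from the linear isometries operad, so $\Ev_0$ is unavailable; I would instead exploit the explicit operadic description of $\sm_{\bS}$ and the structure of the EKMM unit to rerun the same rigidity argument, and I expect this to be the most delicate case.
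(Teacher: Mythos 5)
There are two genuine gaps. First, your foundational claim that $\Ev_0\colon\dt\rightarrow\T$ is strong symmetric monoidal (``one verifies this in each model'') is false for two of the four diagram categories you want to cover. For $\mathcal{W}$-spaces and $\Gamma$-spaces the lax structure map $X(0)\sm Y(0)\rightarrow (X\sm Y)(0)$ is \emph{not} an isomorphism in general: taking $X=Y=\SP$, the infinite symmetric product (a model of $H\mathbb{Z}$), one can exhibit elements of $(\SP\sm\SP)(S^0)$ supported on disconnected indexing objects that are identified with nothing in $\SP(S^0)\sm\SP(S^0)$. Since a merely lax monoidal functor does not send comonoids to comonoids (laxity points the wrong way), your deduction that $C(0)$ is an $S^0$-coalgebra collapses in exactly these cases. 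The isomorphism $C(0)\sm C(0)\cong (C\sm_{\bS}C)(0)$ does hold, but only \emph{because} $C$ is a coalgebra, and its proof already requires the surjectivity statement you are missing (see below). Your assessment of EKMM-spectra is also inverted: evaluation at the zero subspace is perfectly available there, the analogue of the adjunction $R\sm -:\T\rightleftarrows \Mod_R(\ekmm):\Ev_0$ exists, and $(X\sm_{\mathcal L}Y)_0\cong X_0\sm Y_0$ holds automatically because the relevant Thom spaces $\mathcal{M}(V)_U$ are points when $\dim V>\dim U$; so condition-(i)-type issues are trivial for EKMM, and the delicate point there is the same one as below, not the operadic smash per se.

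Second, and more fundamentally, the step you yourself flag as the ``technical heart'' --- pinning down $\Delta_n$ inside $\bigvee_{p+q=n}\Sigma_n^+\wedge_{\Sigma_p\times\Sigma_q}C_p\sm C_q$ by counit laws --- is not how the argument can be completed, and you offer no mechanism for it. The missing idea is to never chase $\Delta$ at higher levels at all. Instead: (a) use counitality to prove that the adjunction counit $\bS\sm C(0)\rightarrow C$ is a levelwise surjection: if some $c\in C(d)$ were not in the image of the structure map $\bS(d)\sm C(0)\rightarrow C(d)$, then its preimages under $\id_C\sm\varepsilon$ and $\varepsilon\sm\id_C$ would lie in the copies $C(d)\sm C(0)$ and $C(0)\sm C(d)$ of $(C\sm_{\bS}C)(d)$ respectively, and one checks (using faithfulness of $\bS\colon\D\rightarrow\T$ and a point-set lemma about maps $S^0\sm X\rightarrow X\sm S^0$) that these two copies are never identified over such a $c$; but both preimages must contain $\Delta(c)$, a contradiction; (b) observe that $\bS\sm C(0)$ is a cocommutative comonoid, since $C(0)$ is one in $\T$ by your (correct) rigidity lemma and $\bS\sm -$ carries a comonoidal structure as the left adjoint of a monoidal adjunction; (c) conclude by noting that cocommutativity descends along epimorphisms of comonoids: if $f$ is such an epimorphism then $\tau\circ\Delta'\circ f=(f\sm f)\circ\tau\circ\Delta=(f\sm f)\circ\Delta=\Delta'\circ f$, whence $\tau\circ\Delta'=\Delta'$. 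This single epimorphism argument replaces your level-by-level chase entirely, is what makes the $\mathcal{W}$- and $\Gamma$-space cases (and the isomorphism in your first step) go through, and transports verbatim to EKMM-spectra.
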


Furthermore, we prove that all $R$-coalgebras are cocommutative whenever $R$ is a commutative $\S$-algebra with $R_0$ homeomorphic to $S^0$; see Theorems \ref{thm: diag spectra coco} and \ref{thm: ekmm coco}. 
Moreover, in Remark~\ref{rem-cof}, we recall that, in symmetric spectra or orthogonal spectra, every commutative $\S$-algebra is weakly equivalent to one whose zeroth space is homeomorphic to $S^0$.

It is important to note that these results are restricted to the listed strict monoidal categories of spectra.  In~\cite{lurie.elliptic}, Lurie considers the category of cocommutative coalgebras in a symmetric monoidal $\infty$-category and establishes an equivalence between the associated dualizable algebra and coalgebra objects in~\cite[Corollary 3.2.5]{lurie.elliptic}.  One could similarly consider the category of (not necessarily cocommutative) coalgebras and their dualizable objects. Since there exist compact {non-commutative} $\S$-algebra spectra, one can consider for such an $X$ the Spanier-Whitehead dual, $DX = \hom(X, \bS)$, and this represents a {non-cocommutative} $\S$-coalgebra spectrum. This shows that the category of coalgebras in the $\infty$-category of spectra has non-cocommutative objects.

Given a symmetric monoidal model category $\mathcal{M}$, the underlying $\infty$-category, $\overline{\mathcal{M}},$ admits a symmetric monoidal structure.  As part of his PhD thesis, the first author is determining conditions that imply the equivalence of the $\infty$-category of (cocommutative) coalgebras over $\overline{\mathcal{M}}$ and the nerve of (cocommutative) coalgebras in $\mathcal{M}$. 
In other words, this compares the coalgebra objects {with a (cocommutative) 
comultiplication up to coherent homotopy} with the strictly (cocommutative) 
coalgebras.

The paper is organized as follows. 
In Section \ref{sec: definitions}, we recall the notions of comonoids and coalgebras and introduce a purely categorical argument, Theorem \ref{thm: prethm}, which is at the heart of the proof of Theorem \ref{thm intro: diag spectra coco}. 
Section \ref{sec: diagram spectra} considers the general setting of diagram spectra introduced in \cite{MMSS} which encapsulates most of the categories of spectra mentioned above. 
The category of EKMM-spectra needs particular care and is considered in Section \ref{sec: ekmm}.

\subsection*{Acknowledgements}  We are grateful for initial conversations on these topics with Kathryn Hess. We also thank Ben Antieau for reading and commenting on early versions of this paper. The second author was supported by the National Science Foundation, DMS-140648. { The authors would like to thank the referee for helpful comments on the preliminary version of this paper.}

\section{Definition and Preliminaries}\label{sec: definitions}

Let $(\C, \otimes, \mathbb{I})$ be a symmetric monoidal category throughout this section.

\begin{defi}
A \emph{comonoid} $(C, \Delta, \varepsilon)$ in $\C$ consists of an object $C$ in $\C$ together with a coassociative comultiplication $\Delta : C\rightarrow C\otimes C$, such that the following diagram commutes: 
\[
\begin{tikzcd}[column sep =large]
C\ar{r}{\Delta} \ar{d}[swap]{\Delta} &C\otimes C\ar{d}{\mathsf{id}_C\otimes \Delta}\\
C\otimes C \ar{r}{\Delta\otimes \mathsf{id}_C} & C\otimes C\otimes C,
\end{tikzcd}
\]
and admits a counit morphism $\varepsilon : C\rightarrow \mathbb{I}$ such that we have the following commutative diagram: 
\[
\begin{tikzcd}[row sep=small]
C\otimes C \ar{r}{\mathsf{id}_C\otimes \varepsilon} & C\otimes \mathbb{I} \cong C \cong \mathbb{I}\otimes C & C\otimes C \ar{l}[swap]{\varepsilon \otimes \mathsf{id}_C}\\
& C.\ar[equals]{u}\ar[bend left]{ul}{\Delta} \ar[bend right]{ur}[swap]{\Delta} &
\end{tikzcd}
\]
The comonoid is \emph{cocommutative} if the following diagram commutes: 
\[
\begin{tikzcd}
C\otimes C \ar{rr}{\tau} && C\otimes C\\
& C,\ar{ul}{\Delta}\ar{ur}[swap]{\Delta} &
\end{tikzcd}
\]
where $\tau$ is the twist isomorphism from the symmetric monoidal structure of $\C$. A morphism of comonoids $f:(C,\Delta, \varepsilon)\rightarrow (C',\Delta',\varepsilon')$ is a morphism $f:C\rightarrow C'$ in $\C$ such that the following diagrams commute: 
\[
\begin{tikzcd}
C\ar{r}{f} \ar{d}[swap]{\Delta} & C'\ar{d}{\Delta'} & C\ar{r}{f}\ar{dr}[swap]{\varepsilon} & C'\ar{d}{\varepsilon'}\\
C\otimes C \ar{r}{f\otimes f} & C'\otimes C', & & \I.
\end{tikzcd}
\]
We denote $\comon(\C)$ the category of comonoids in $\C$.
\end{defi}

In the next sections, our main strategy uses the following result. 

\begin{thm}\label{thm: prethm}
Let $(\C, \otimes, \mathbb{I})$ and $(\D, \odot, \mathbb{J})$ be symmetric monoidal categories endowed with a pair of adjoint underlying functors 
$\begin{tikzcd}
L:\C \ar[shift left]{r} & \ar[shift left]{l} \D:R,
\end{tikzcd}$ where the functor $R$ is lax symmetric monoidal such that: 
\begin{enumerate}[label=\upshape\textbf{(\roman*)}]
\item \label{cond1 of thm} the maps $\I\rightarrow R(\J)$ and $R(D)\otimes R(D)\rightarrow R(D\odot D)$ are isomorphisms in $\D$, for all comonoids $D$ in $\D$;
\item \label{cond2 of thm} all comonoids in $\C$ are cocommutative;
\item \label{cond3 of thm} for each comonoid $D$ in $\D$, the counit map $LR(D)\rightarrow D$ given by the adjunction is an epimorphism in $\D$.
\end{enumerate}
Then all comonoids in $\D$ are cocommutative.
\end{thm}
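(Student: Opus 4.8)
The plan is to transport cocommutativity from $\C$ to $\D$ along the functor $R$, with the three hypotheses playing distinct roles: condition \ref{cond1 of thm} will promote $R$ to a functor $\comon(\D) \to \comon(\C)$, condition \ref{cond2 of thm} will supply cocommutativity upstairs in $\C$, and condition \ref{cond3 of thm} will let me descend the resulting identity back to $\D$.

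First I would equip $R(D)$ with a comonoid structure in $\C$ for each comonoid $(D, \Delta, \varepsilon)$ in $\D$. Write $\iota : \I \to R(\J)$ and $\mu : R(D)\otimes R(D) \to R(D\odot D)$ for the lax structure maps of $R$; by condition \ref{cond1 of thm} both are isomorphisms. I then define the comultiplication and counit of $R(D)$ to be the composites $\mu^{-1}\circ R(\Delta) : R(D) \to R(D)\otimes R(D)$ and $\iota^{-1}\circ R(\varepsilon) : R(D) \to \I$. Verifying coassociativity and counitality is routine: one applies $R$ to the comonoid axioms for $D$ and then uses the associativity and unit coherence diagrams of the lax monoidal functor $R$, together with the invertibility of $\mu$ and $\iota$ to commute the inverses past one another. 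This produces a genuine comonoid $R(D)$ in $\C$.

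Next I invoke condition \ref{cond2 of thm}, so that $R(D)$ is cocommutative, i.e. $\tau_{R(D)}\circ \mu^{-1}\circ R(\Delta) = \mu^{-1}\circ R(\Delta)$, where $\tau_{R(D)}$ denotes the twist on $R(D)\otimes R(D)$. Because $R$ is \emph{symmetric} lax monoidal, its structure map intertwines the twists, $\mu\circ \tau_{R(D)} = R(\tau_D)\circ \mu$, whence $\tau_{R(D)}\circ\mu^{-1} = \mu^{-1}\circ R(\tau_D)$, with $\tau_D$ the twist on $D\odot D$. Substituting this into the cocommutativity identity and cancelling the isomorphism $\mu^{-1}$ yields the equality $R(\tau_D \circ \Delta) = R(\Delta)$ of morphisms $R(D) \to R(D\odot D)$ in $\C$. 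Finally I would descend this equality to $\D$: writing $\theta : LR \Rightarrow \id_\D$ for the counit of $L\dashv R$, naturality of $\theta$ applied to $\tau_D\circ\Delta$ and to $\Delta$ gives $(\tau_D\circ\Delta)\circ\theta_D = \theta_{D\odot D}\circ LR(\tau_D\circ\Delta)$ and $\Delta\circ\theta_D = \theta_{D\odot D}\circ LR(\Delta)$. Since $R(\tau_D\circ\Delta) = R(\Delta)$ forces $LR(\tau_D\circ\Delta) = LR(\Delta)$, the two right-hand sides agree, so $(\tau_D\circ\Delta)\circ\theta_D = \Delta\circ\theta_D$; by condition \ref{cond3 of thm} the counit $\theta_D$ is an epimorphism and may be cancelled on the right, giving $\tau_D\circ\Delta = \Delta$, so $D$ is cocommutative.

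The step I expect to require the most care is the first one: a lax monoidal functor does not in general send comonoids to comonoids, so it is essential that condition \ref{cond1 of thm} makes the structure maps invertible, which is precisely what lets me build the needed comultiplication by hand and check the comonoid axioms from the coherence of $R$. Once $R(D)$ is known to be a comonoid the remainder is a short diagram chase, and the conceptually decisive point is the final descent, where the epimorphism hypothesis \ref{cond3 of thm} is exactly what upgrades the identity $R(\tau_D\circ\Delta)=R(\Delta)$, valid after applying $R$, to an honest identity in $\D$.
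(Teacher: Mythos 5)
Your proof is correct, and its second half takes a genuinely different route from the paper's. The first step is identical: like the paper, you use condition \ref{cond1 of thm} to transport the comonoid structure of $D$ to $R(D)$ via $\mu^{-1}\circ R(\Delta)$ and $\iota^{-1}\circ R(\varepsilon)$, with the axiom-checking left as routine. From there the paper proceeds structurally: by doctrinal adjunction (\cite[Proposition 3.85]{AM}) the lax symmetric structure on $R$ induces a colax symmetric structure on $L$ making $(L,R)$ a monoidal conjunction; hence $L$ sends (cocommutative) comonoids to (cocommutative) comonoids, so $LR(D)$ is a cocommutative comonoid by condition \ref{cond2 of thm}; the adjunction counit $LR(D)\to D$ is then argued to be a morphism of comonoids (as in \cite[Proposition 3.93]{AM}), and condition \ref{cond3 of thm} together with Proposition \ref{prop: cocommutative + epi} finishes. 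You bypass all of that machinery: from cocommutativity of $R(D)$ and the symmetry axiom $\mu\circ\tau_{R(D)}=R(\tau_D)\circ\mu$ you extract the bare equation $R(\tau_D\circ\Delta)=R(\Delta)$ in $\C$, apply $L$, and then use only naturality of the counit $\theta\colon LR\Rightarrow \id$ and condition \ref{cond3 of thm} to cancel the epimorphism $\theta_D$ and conclude $\tau_D\circ\Delta=\Delta$. Your descent is more elementary and self-contained (no colax structure on $L$, no claim that $\theta_D$ is a comonoid morphism, no need for Proposition \ref{prop: cocommutative + epi}); the paper's route gives slightly more structural output, namely that $D$ is the image of an honest cocommutative comonoid $LR(D)$ under an epimorphism of comonoids, which is the reusable statement that Proposition \ref{prop: cocommutative + epi} packages.

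One caveat, shared equally with the paper's ``one can check'': the delicate point in step one is coassociativity of $R(D)$. Counitality does follow cleanly, since the unit coherence of $R$ forces $\mu_{D,\J}$ to be invertible; but the natural coassociativity check runs through the structure maps $\mu_{D\odot D,D}$ and $\mu_{D,D\odot D}$, which condition \ref{cond1 of thm} as literally stated does not make invertible, so one only obtains equality of the two coassociativity composites after pushing forward along $\mu_{D\odot D,D}\circ(\mu_{D,D}\otimes\id)$. This is harmless when $R$ is strong monoidal, or if condition \ref{cond1 of thm} is read as applying to all pairs of comonoids (note $D\odot D$ is again a comonoid), or in this paper's applications where $\C=\T$ and counitality alone already forces the induced comultiplication on $R(D)$ to be the diagonal; but be aware that your step one, exactly like the paper's, leans on this point.
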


\noindent
Notice that the condition \ref{cond1 of thm} of Theorem \ref{thm: prethm} is respected whenever $R$ is a strong monoidal functor.

\begin{proof}
Recall from \cite[Proposition 3.85]{AM}, as $R:\D\rightarrow \C$ is a lax symmetric monoidal functor, there exists a unique lax symmetric comonoidal structure on the functor $L:\C\rightarrow \D$ such that the adjoint pair forms a symmetric monoidal conjunction (sometimes called colax-lax adjunction, see \cite[Definition 3.81]{AM}). Since $L$ is lax symmetric comonoidal, it sends (cocommutative) comonoids in $\C$ to (respectively cocommutative) comonoids in $\D$ (see \cite[Proposition 3.29, Proposition 3.37]{AM}). 
Because of condition \ref{cond1 of thm}, one can check that if $(D,\Delta, \varepsilon)$ is a comonoid in $\D$, then $R(D)$ is a comonoid in $\C$ with comultiplication:
\[
\begin{tikzcd}
R(D)\ar{r}{R(\Delta)} & R(D\odot D) & R(D)\otimes R(D) \ar{l}[swap]{\cong},
\end{tikzcd}
\]
and counit:
\[
\begin{tikzcd}
R(D) \ar{r}{R(\varepsilon)} & R(\J) & \I\ar{l}[swap]{\cong}.
\end{tikzcd}
\]
Using condition \ref{cond2 of thm} we get that $LR(D)$ is a cocommutative comonoid in $\D$ for any (not necessarily cocommutative) comonoid $D$ in $\D$. 
One can show the counit map of the adjoint $LR(D)\rightarrow D$ is a morphism of comonoids in $\D$ as in \cite[Proposition 3.93]{AM}. 
We conclude using \ref{cond3 of thm} and Proposition \ref{prop: cocommutative + epi}.
\end{proof}

Recall that any subalgebra of a commutative algebra is also commutative. The following is the dual case and could be proved using opposite categories.

\begin{prop}\label{prop: cocommutative + epi}
Let $(C,\Delta, \varepsilon)$ and $(C',\Delta', \varepsilon')$ be comonoids in $(\C,\otimes, \I)$. Suppose $C$ is cocommutative. 
Given a morphism of comonoids $f:C\rightarrow C'$, if $f$ is an epimorphism in $\C$, then $C'$ is also cocommutative.
\end{prop}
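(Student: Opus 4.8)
The plan is to prove that cocommutativity is preserved under comonoid epimorphisms by chasing the relevant diagrams, exploiting the fact that $f$ is an epimorphism so that it can be cancelled on the right. The goal is to verify that the two composites $\Delta'$ and $\tau \circ \Delta'$ from $C'$ to $C' \otimes C'$ agree. The central idea is that both of these become equal after precomposition with $f$, and then the epimorphism property forces them to be equal.

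First I would record the two hypotheses as equations. Cocommutativity of $C$ says $\tau \circ \Delta = \Delta$, where $\tau$ is the twist on $C \otimes C$. The morphism condition on $f$ says $\Delta' \circ f = (f \otimes f)\circ \Delta$. Next I would compute $\Delta' \circ f$ in two ways. On one hand it equals $(f\otimes f)\circ \Delta$ directly. On the other hand, I would compute $\tau \circ \Delta' \circ f$ and show it also equals $(f \otimes f)\circ \Delta$. The key is the naturality of the twist isomorphism $\tau$ with respect to the morphism $f \otimes f$, i.e.\ the square
\[
\begin{tikzcd}[column sep=large]
C\otimes C \ar{r}{f\otimes f}\ar{d}[swap]{\tau} & C'\otimes C'\ar{d}{\tau}\\
C\otimes C \ar{r}{f\otimes f} & C'\otimes C'
\end{tikzcd}
\]
commutes by the naturality of the symmetry. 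Combining this with the cocommutativity of $C$ yields
\[
\tau\circ\Delta'\circ f = \tau\circ(f\otimes f)\circ\Delta = (f\otimes f)\circ\tau\circ\Delta = (f\otimes f)\circ\Delta = \Delta'\circ f.
\]

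Having shown $\tau\circ\Delta'\circ f = \Delta'\circ f$, I would invoke the assumption that $f$ is an epimorphism to cancel it on the right, concluding $\tau\circ\Delta' = \Delta'$, which is precisely the cocommutativity of $C'$. I do not expect a serious obstacle here: the argument is a short diagram chase and the only subtle point is to use naturality of $\tau$ correctly, making sure the twist on the source and target tensor products is matched up via the correct instance of the symmetry. As the statement itself notes, this is formally dual to the elementary fact that a quotient (or more generally an image under an algebra map) of a commutative algebra is commutative, so one could alternatively deduce it by passing to opposite categories, where an epimorphism of comonoids becomes a monomorphism of monoids and cocommutativity becomes commutativity; I would mention this as the conceptual justification but carry out the direct diagram chase for concreteness.
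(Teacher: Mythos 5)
Your proposal is correct and coincides with the paper's own proof: the same chain of equalities $\tau\circ\Delta'\circ f = \tau\circ(f\otimes f)\circ\Delta = (f\otimes f)\circ\tau\circ\Delta = (f\otimes f)\circ\Delta = \Delta'\circ f$, using the comonoid-morphism square, naturality of $\tau$, and cocommutativity of $C$, followed by cancelling the epimorphism $f$ on the right. The paper likewise notes the dual relationship to the fact that quotients of commutative algebras are commutative, so there is no substantive difference.
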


\begin{proof}
Since $f$ is a morphism of comonoids the top square in the following diagram commutes:
\[
\begin{tikzcd}
C\ar{r}{f} \ar{d}{\Delta}\ar[bend right=60]{dd}[swap]{\Delta} & C' \ar{d}{\Delta'} \\
C\otimes C \ar{r}{f\otimes f} \ar{d}{\tau} & C'\otimes C'\ar{d}{\tau}\\
C\otimes C \ar{r}{f\otimes f} & C'\otimes C'.
\end{tikzcd}
\]
The bottom square commutes from the naturality of the twist isomorphism $\tau$. The left side commutes as $C$ is cocommutative.  The commutativity of the above diagram gives:
\[
\tau\circ \Delta ' \circ f= \tau \circ (f\otimes f) \circ \Delta = (f\otimes f)\circ \tau \circ \Delta= (f\otimes f)\circ \Delta= \Delta'\circ f.
\]
Since $f$ is an epimorphism in $\C$, it follows that $\Delta'=\tau \circ \Delta'$.
Therefore $(C',\Delta', \varepsilon')$ is cocommutative.
\end{proof}

Subsequently the condition \ref{cond2 of thm} of Theorem \ref{thm: prethm} will be verified using the following lemma.
Let $(\Top, \times, *)$ be the category of spaces (weak Hausdorff $k$-spaces) endowed with the Cartesian product and $(\Top_*, \Smash, S^0)$ the based spaces endowed with the smash product, where $S^0=\{0,1\}$ is the unit, with $0$ as basepoint. Recall that the functor:
\begin{eqnarray*}
(-)_+:(\Top, \times, *) & \longrightarrow & (\Top_*,\Smash, S^0)\\
X & \longmapsto & X \coprod \{ *\}
\end{eqnarray*}
is strong symmetric monoidal: $X_+\Smash Y_+\cong (X\times Y)_+$. Since $(\Top, \times, *)$ is a Cartesian symmetric monoidal category, every space $C$ has a unique comonoidal structure with respect to the Cartesian product, see \cite[Example 1.19]{AM}. The counit $\varepsilon: C\rightarrow *$ is the unique map to the terminal object, and the comultiplication $\Delta:C\rightarrow C\times C$ is the diagonal $\Delta=(\id_C, \id_C)$. The comultiplication is always cocommutative. Since the functor $(-)_+:\Top\rightarrow \Top_*$ is strong symmetric monoidal, it sends the cocommutative comonoid $(C,\Delta, \varepsilon)$ to a cocommutative comonoid $(C_+,\Delta_+, \varepsilon_+)$. The next result says that these are the only possible comonoids in $(\Top_*, \Smash, S^0)$. It turns out that this is purely a point-set argument, and the result remains valid for the category of sets, and simplicial sets, denoted respectively $\Set$ and $\sset$.

\begin{lem}\label{lem: coalg in set}
Let the category $\C$ be either $\Set$, $\sset$ or $\Top$ endowed with its Cartesian symmetric monoidal structure. Then the faithful strong symmetric monoidal functor $(-)_+:(\C,\times, *)\rightarrow (\C_*, \Smash, S^0)$ lifts to an equivalence of categories $(-)_+:\comon(\C)\rightarrow \comon(\C_*)$. In particular, any comonoid in $\C_*$ is cocommutative and isomorphic to a certain comonoid $(C_+, \Delta_+, \varepsilon_+)$ where $(C,\Delta,\varepsilon)$ is a cocommutative comonoid in $\C$ with the diagonal as a comultiplication.
\end{lem}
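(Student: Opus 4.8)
The plan is to prove that $(-)_+$ is an equivalence by checking essential surjectivity and full faithfulness, with the heart of the argument a point-set computation in $\Set$ that I then transport to $\sset$ and $\Top$. Since $\C$ is Cartesian, $\comon(\C)$ is isomorphic to $\C$ itself: every object carries a unique comonoid structure given by the diagonal, and every morphism is automatically a morphism of comonoids. Because $(-)_+$ is strong symmetric monoidal it sends comonoids to comonoids, and it is faithful by hypothesis, so it remains to analyze $\comon(\C_*)$ and to establish essential surjectivity and fullness of the lifted functor.

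First I would carry out the computation for $\C = \Set$. Let $(D, \Delta, \varepsilon)$ be a comonoid in $\Set_*$ with basepoint $*$, and set $C = \varepsilon^{-1}(1) \subseteq D$. The key point is that a nonbasepoint element of the smash $D \wedge D$ is a genuine pair $(a, b)$ with $a, b \neq *$. Writing $\Delta(d) = (a, b)$ (a priori possibly the basepoint) and feeding this into the two counit axioms $(\id \wedge \varepsilon)\circ\Delta = \id = (\varepsilon \wedge \id)\circ\Delta$ forces, for every $d \neq *$, that $a = b = d$ and $\varepsilon(d) = 1$ simultaneously. Hence $\varepsilon^{-1}(0) = \{*\}$, so $D = C_+$, the comultiplication is the diagonal $\Delta = \Delta_+$, and the counit is $\varepsilon_+$. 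This shows every comonoid in $\Set_*$ is literally of the form $(C_+, \Delta_+, \varepsilon_+)$, giving essential surjectivity and cocommutativity at once. For fullness I would use that a morphism of comonoids $g : C_+ \to C'_+$ must commute with the counits, i.e. $\varepsilon'_+ \circ g = \varepsilon_+$; since $\varepsilon_+$ takes the value $1$ exactly on $C$, this forces $g(C) \subseteq C'$, so $g$ sends nonbasepoints to nonbasepoints and the basepoint to the basepoint, whence $g = (g|_C)_+$. Combined with faithfulness this shows $(-)_+ : \comon(\C) \to \comon(\C_*)$ is fully faithful, hence an equivalence.

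Finally I would transport the argument to $\sset$ and $\Top$, where the essential observation is that $S^0$ is discrete, so $\varepsilon$ is constant along the simplicial structure maps (respectively continuous into a discrete space). In $\sset$ the smash product and the comonoid axioms are levelwise, so the $\Set$ computation applies in each degree; because $\varepsilon(d_i x) = \varepsilon(x)$ and $\varepsilon(s_i x) = \varepsilon(x)$, the subsets $C_n = \varepsilon_n^{-1}(1)$ are closed under faces and degeneracies and assemble into a simplicial subset $C$ with $D \cong C_+$. In $\Top$ the underlying-set computation identifies $\varepsilon^{-1}(0)$ with $\{*\}$, and since $\varepsilon$ is continuous into the discrete $S^0$ the sets $\varepsilon^{-1}(0)$ and $\varepsilon^{-1}(1)$ are clopen, exhibiting $D$ as the topological disjoint union $C \sqcup \{*\} = C_+$.

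I expect the main obstacle to be the core counit computation in $\Set$, namely pinning down $\Delta(d) = (d,d)$ and $\varepsilon(d) = 1$ for every nonbasepoint $d$ simultaneously from the two counit triangles; once that is in hand, the subobject and clopen verifications needed to globalize to $\sset$ and $\Top$ are routine.
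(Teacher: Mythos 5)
Your proposal is correct and takes essentially the same route as the paper: the core of both arguments is the point-set analysis of the two counit axioms in $\Set_*$, which forces $\Delta(d)=d\wedge d$ and $\varepsilon(d)=1$ for every non-basepoint $d$, hence $\varepsilon^{-1}(0)=\{*\}$ and $D\cong C_+$ with the diagonal comultiplication, with fullness then extracted from compatibility with the counits exactly as in the paper. The only difference is that you actually spell out how to transport the argument to $\sset$ (levelwise computation plus closure of $\varepsilon^{-1}(1)$ under faces and degeneracies) and to $\Top$ (clopenness of $\varepsilon^{-1}(1)$ since $S^0$ is discrete), details the paper omits by declaring the other cases ``similar.''
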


\begin{proof}
We argue only for $\C=\Set$ and claim the other cases are similar.
The functor is clearly faithful. Let us first show that it is essentially surjective on comonoid objects.
Let $(C', \Delta', \varepsilon')$ be a comonoid in $\Set_*$. We first argue that $\Delta'(c)=c\Smash c$ for all $c\neq *$ in $C$.
Let us denote $\Delta'(c)=c_1 \Smash c_2$. 
 
If $c_1=*$, then $(\mathsf{id}_C'\Smash \varepsilon')(c_1\Smash c_2)=*$, as the map $\mathsf{id}_C'\Smash \varepsilon'$ is pointed, and thus counitality of $C'$ shows:
\begin{equation}\label{equ: comonoid in set}
c=((\mathsf{id}_C'\Smash \varepsilon')\circ \Delta') (c)=(\mathsf{id}_C'\Smash \varepsilon')(c_1\Smash c_2)=*,
\end{equation}
which is a contradiction with $c\neq *$. Thus $c_1\neq *$, and similarly one can show $c_2\neq *$ when $c\neq *$. 

Let us show $\varepsilon'(c_1)= 1$ and $\varepsilon'(c_2)= 1$, when $c\neq *$. If we assume $\varepsilon'(c_2)= 0$, then we obtain again equation (\ref{equ: comonoid in set}) which is a contradiction with $c\neq *$. Thus $\varepsilon'(c_2)\neq 0$ and we prove similarly $\varepsilon'(c_1)\neq 0$ when $c\neq *$.

Let us prove $\Delta(c)=c\Smash c$, for $c\neq *$.
Since $C'$ is counital, we get: 
\[
c = ((\mathsf{id}_C'\Smash \varepsilon')\circ \Delta') (c)=c_1\Smash \varepsilon'(c_2)=c_1\Smash 1.
\] So $c_1=c$. Similarly, $c_2=c$.

Now notice that $C'\cong \varepsilon'^{-1}(0)\coprod \varepsilon'^{-1}(1)$. Let $c\in \varepsilon'^{-1}(0)$. Then from the equation:
\[
((\mathsf{id}_{C'}\Smash \varepsilon')\circ \Delta') (c)=(\mathsf{id}_{C'}\Smash \varepsilon')(c\Smash c)=c\Smash 0=*,\]
counitality of $C'$ concludes that $c=*$. Thus $\varepsilon'^{-1}(0)=*$. 
Regard $C=\varepsilon'^{-1}(1)$ as an object in $\Set$.
Denote the diagonal on $C$ by $\Delta=(\id_C,\id_C):C\rightarrow C\times C$ and $\varepsilon: C\rightarrow *$ the unique map to the point. 
We have just shown that $(C_+, \Delta_+, \varepsilon_+)$ is isomorphic to the comonoid $(C', \Delta', \varepsilon')$ in $\Set_*$.
This proves that the functor $(-)_+$ is essentially surjective on comonoids. 

Let us show the functor $(-)_+:\Set\rightarrow \Set_*$ is full on comonoid objects. Given two comonoids denoted by $(C_+, (\Delta_C)_+, (\varepsilon_C)_+)$ and $(D_+, (\Delta_D)_+, (\varepsilon_D)_+)$ in $\Set_*$, let $f:C_+\rightarrow D_+$ be a map of comonoids in $\Set_*$. Then counitality gives the commutative diagram:
\[
\begin{tikzcd}[column sep=small]
C_+ \ar{rr}{f}\ar{dr}[swap]{(\varepsilon_C)_+} & & D_+\ar{dl}{(\varepsilon_D)_+}\\
& S^0. &
\end{tikzcd}
\]
If, for $c\neq *$ in $C_+$, we have $f(c)=*$, then commutativity of the diagram gives:
\[
1=(\varepsilon_C)_+(c)=((\varepsilon_D)_+\circ f)(c)=(\varepsilon_D)_+(*)=0,
\]
which is a contradiction. Thus the map $f:C_+\rightarrow D_+$ is induced by a map $C\rightarrow D$ in $\Set$, which proves that $(-)_+:\Set \rightarrow \Set_*$ is full on comonoid objects.
\end{proof}

\begin{rem}
Subsequently, we will only be using that all comonoids in the symmetric monoidal category $(\T, \Smash, S^0)$ are cocommutative. Notice that Theorem \ref{thm: prethm} does \emph{not} apply for the adjoint pair of functors:
\[
\begin{tikzcd}
(-)_+:(\Top, \times, *) \ar[shift left]{r} & (\T, \Smash, S^0):U \ar[shift left]{l},
\end{tikzcd}
\]
where $U:\T\rightarrow \Top$ is the forgetful functor, as $U$ does not respect condition \ref{cond1 of thm} of Theorem \ref{thm: prethm}.
\end{rem}

Recall that given a commutative monoid $R$ in $\C$, the category of (left) modules over $R$ in $\C$, denoted $\mathsf{Mod}_R(\C)$ is a symmetric monoidal category, where the unit is $R$ and the monoidal product is denoted $\otimes_R$ and is defined as the coequalizer:
\[ 
\begin{tikzcd}[row sep=large, column sep = huge]
 M\otimes R\otimes N \ar[shift left, "\id_M \otimes \alpha_N"]{r}
 \ar[shift right, "(\alpha_M\circ \tau) \otimes \id_N"']{r} & M \otimes N, 
 \end{tikzcd} 
\]
where $\alpha_M:R\otimes M \rightarrow M$ and $\alpha_N:R\otimes N\rightarrow N$ are the (left) $R$-actions on $M$ and $N$ respectively. 
This leads to the following definition.

\begin{defi}
Let $R$ be a commutative monoid in $\C$. A \emph{coalgebra $(C, \Delta, \varepsilon)$ over $R$ in $\C$}, or an \emph{$R$-coalgebra in $\C$}, is a comonoid $(C, \Delta, \varepsilon)$ in the symmetric monoidal category $( \mathsf{Mod}_R(\C), \otimes_R, R)$. A morphism $f:(C, \Delta, \varepsilon)\rightarrow (C',\Delta', \varepsilon')$ of $R$-coalgebras in $\C$ is a morphism of comonoids in $\mathsf{Mod}_R(\C)$. 
\end{defi}

\section{Coalgebras in Diagram Spectra}\label{sec: diagram spectra}

Let us recall the definitions from \cite{MMSS} and set the notation. 
Let $\D=(\D, \otimes, 0)$ be a locally small symmetric monoidal based topological category with unit object $0$ and continuous monoidal product $\otimes$, with base point $*$.
Let $\T$ be the category of based spaces (weak Hausdorff $k$-spaces). Recall that a \emph{$\D$-space} $X$ is a continuous based functor $X:\D\rightarrow \T$. 
If $X$ and $Y$ are $\D$-spaces, their (internal) smash product $X\Smash Y$ is a $\D$-space such that, for each object $d$ in $\D$, we have:
\begin{equation}\label{equ: smash in diagrams}
(X\wedge Y)(d) = \mathsf{colim}_{e\otimes f \rightarrow d} \Big(X(e)\wedge Y(f)\Big),
\end{equation}
where the colimit is taken over the commutative triangles:
\begin{equation}\label{diag: colimit over d}
\begin{tikzcd}
e'\otimes f' \ar{rr}{\varphi \otimes \psi}\ar{dr} & & e\otimes f\ar{dl}\\
& d. &
\end{tikzcd}
\end{equation}
As $\D$ is locally small, we can interpret the above colimit as a coend, i.e., $(X\Smash Y)(d)$ is the following coequalizer in $\T$:

\begin{equation}\label{equ: colimit coend}
\begin{tikzpicture}[baseline= (a).base]
\node[scale=.95] (a) at (0,0){
\begin{tikzcd}[column sep=huge]
\displaystyle  \! \! \! \! \bigvee_{\substack{\scriptstyle(\varphi, \psi)\, \mathsf{in} \\ \mathsf{Mor}(\D\times \D)}} \! \! \! \D(e\otimes f, d) \Smash X(e')\Smash Y(f')\ \  \ar[shift left]{r}{\scriptstyle(\varphi\otimes \psi)^*\Smash \id \Smash \id}\ar[shift right]{r}[swap]{\scriptstyle\id\Smash X(\varphi)\Smash Y(\psi)} & \displaystyle\bigvee_{\substack{\scriptstyle (e,f)\,\mathsf{in} \\ \mathsf{Ob}(\D\times \D)}}  \! \! \!  \D(e\otimes f, d) \Smash X(e) \Smash Y(f).
\end{tikzcd}
};
\end{tikzpicture}
\end{equation}
See more detail in \cite[Definition 21.4]{MMSS}. 
Then $\D$-spaces form a symmetric monoidal category denoted $\T^{\D}$ (see \cite[Theorem 1.7]{MMSS} where the category is denoted $\D\mathcal{T}$). 

A commutative monoid in $\T^{\D}$ is a lax symmetric monoidal functor $\D\rightarrow \T$. 
Let $R$ be a commutative monoid in $\T^{\D}$ with unit $\lambda:S^0\rightarrow R(0)$, and product $\phi: R(d)\Smash R(e)\rightarrow R(d\otimes e)$, for any $d$ and $e$ in $\D$.
A \emph{$\D$-spectrum $X$ over $R$} is an $R$-module in $\T^{\D}$. It is a $\D$-space $X:\D\rightarrow \T$ together with continuous maps $\sigma : R(d)\Smash X(e) \rightarrow X(d\otimes e)$, natural in $d$ and $e$, such that the composite:
\[ 
X(d)\cong S^0\Smash X(d) \stackrel{\lambda\Smash\id}\longrightarrow R(0)\Smash X(d) \longrightarrow X(0\otimes d)\cong X(d),
\] 
is the identity and the following diagram commutes:
\[
\begin{tikzcd}
 R(e) \Smash R(f)\Smash X(d)\ar{d}[swap]{\phi\Smash \id} \ar{r}{\id\Smash \sigma} & R(e)\Smash X(f\otimes d) \ar{d}{\sigma}\\
 R(e\otimes f)\Smash X(d) \ar{r}[swap]{\sigma} & X(e\otimes f\otimes d).
\end{tikzcd}
\]
As recalled in the previous section, the smash product $X\Smash_R Y$ of two $R$-modules $X$ and $Y$ is defined as the coequalizer:
\[
\begin{tikzcd}[column sep=huge]
 X\Smash R\Smash Y \ar[shift left]{r}{\id_X\Smash \sigma_Y}\ar[shift right]{r}[swap]{(\sigma_X\circ \tau) \Smash \id_Y} & X\Smash Y,\end{tikzcd}
\]
where $\tau$ is the twist isomorphism. We denote the category $\Mod_R(\T^{\D})$ of $R$-modules simply by $\DT_R$.
Subsequently we assume the following conditions on the topological category $\D$.

\begin{property}\label{property1}
There is a faithful strong symmetric monoidal continuous based functor $\S: \D\rightarrow \T$.
\end{property}

\begin{property}\label{property2} 
If $\S(d)$ is homeomorphic to the base point in $\T$, then $d$ is isomorphic to the base point $*$ in $\D$. If $\S(d)$ is homeomorphic to $S^0$, then $d$ is isomorphic to the unit object $0$ in $\D$. 
\end{property}  

\begin{ex}
Recall, from \cite[Examples 4.2, 4.4, 4.6, 4.8]{MMSS}, that particular choices of the category $\D$ recover the usual definition of the categories of symmetric spectra, orthogonal spectra, $\Gamma$-spaces and $\mathcal{W}$-spaces, among others.  It is also shown that there exist faithful strong symmetric monoidal based functors $\S: \D\rightarrow \T$ for each of the choices of $\D$, which of course identify with the usual sphere spectrum definition. It is elementary to check that Property \ref{property2} is verified in each of these cases. There may be other examples of interest.
\end{ex}

We now state the main theorem.

\begin{thm}\label{thm: diag spectra coco}
Let $\D$ satisfy Properties {\normalfont\ref{property1}} and {\normalfont\ref{property2}}.
Let $(R,\phi, \lambda)$ be a commutative monoid in $\T^{\D}$, where $R(0)\cong S^0$. Let $(C, \Delta, \varepsilon)$ be an $R$-coalgebra in $\T^{\D}$. Then $C$ is a cocommutative $R$-coalgebra. 
In particular, all coalgebras over the sphere spectrum in symmetric spectra, orthogonal spectra, $\Gamma$-spaces and $\mathcal{W}$-spaces are cocommutative.
\end{thm}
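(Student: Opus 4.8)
The plan is to deduce the statement from the categorical criterion of Theorem~\ref{thm: prethm}, applied to the evaluation-at-zero adjunction. Concretely, I would take $\C=(\T,\Smash,S^0)$ and $\D=(\DT_R,\Smash_R,R)$, and use the adjoint pair $L\dashv R'$ in which $R'=\Ev_0\circ U\colon \DT_R\to\T$ sends an $R$-module $X$ to the based space $X(0)$ (here $U\colon\DT_R\to\T^{\D}$ is the forgetful functor and $\Ev_0$ is evaluation at the unit $0$), and $L=\mathrm{Free}_R\circ F_0$ is its left adjoint, the free $R$-module on the free $\D$-space generated at level $0$. Since $U$ is lax symmetric monoidal (the canonical map $UX\Smash UY\to U(X\Smash_R Y)$ is the defining coequalizer projection, and the unit map is that of the monoid $R$) and evaluation at the monoidal unit $0$ is lax symmetric monoidal, the composite $R'$ is lax symmetric monoidal, so Theorem~\ref{thm: prethm} is applicable once its three hypotheses are checked.

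Condition~\ref{cond2 of thm} is immediate: all comonoids in $\T=\Top_*$ are cocommutative by Lemma~\ref{lem: coalg in set}. The unit half of condition~\ref{cond1 of thm} is also immediate, since $R'(R)=R(0)$ and the structure map $S^0\to R(0)$ is the unit $\lambda$, which is an isomorphism because $R(0)\cong S^0$ and $\lambda$ is the unit of a monoid structure on $S^0$, hence not the constant map to the basepoint. The first substantial step is the product half of condition~\ref{cond1 of thm}, that for a coalgebra $D$ the lax structure map $D(0)\Smash D(0)\to (D\Smash_R D)(0)$ is an isomorphism. By \eqref{equ: smash in diagrams} and \eqref{equ: colimit coend}, $(D\Smash D)(0)$ is the coend of $\D(e\otimes f,0)\Smash D(e)\Smash D(f)$, and I would show it collapses onto its $(e,f)=(0,0)$ summand $D(0)\Smash D(0)$. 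Using that $\S$ is faithful and strong monoidal with $\S(0)=S^0$ (Property~\ref{property1}) together with Property~\ref{property2}, morphisms $e\otimes f\to 0$ are rigid enough that the coend is identified with the $(0,0)$ summand; passing to the coequalizer defining $\Smash_R$ only divides by the level-zero action of $R(0)\cong S^0$, which is trivial. The explicit inverse is provided by the coalgebra counit $\varepsilon\colon D\to R$: the two maps $\id\Smash\varepsilon$ and $\varepsilon\Smash\id$ evaluated at $0$, composed with $D(0)\times D(0)\to D(0)\Smash D(0)$, undo the structure map by counitality, exactly as in the point-set computation of Lemma~\ref{lem: coalg in set} where $\varepsilon$ takes values in $S^0=\{0,1\}$.

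The step I expect to be the main obstacle is condition~\ref{cond3 of thm}, that the counit $LR'(D)\to D$ is an epimorphism in $\DT_R$. I would first reduce to a levelwise statement in $\T$, where the spaces are weak Hausdorff $k$-spaces and epimorphisms are precisely the maps with dense image; it then suffices to show that at each object $e$ the image of $LR'(D)(e)\to D(e)$ is dense. This is where the coalgebra structure is indispensable, since for a general $R$-module $D$ the counit need not be levelwise dense: applying $\varepsilon\Smash\id$ to $\Delta(a)$ rewrites any $a\in D(e)$ through the $R$-action on lower data, and I would combine this with the rigidity of morphisms into $0$ from Property~\ref{property2} to push the surviving $D$-factor into level $0$, so that $a$ is approximated by elements of the image of $D(\delta)\colon D(0)\to D(e)$ propagated by the $R$-action. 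Controlling this density uniformly is the delicate point.

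Once all three conditions are verified, Theorem~\ref{thm: prethm} yields that every comonoid in $\DT_R$, that is, every $R$-coalgebra in $\T^{\D}$, is cocommutative. Finally, taking $R=\S$ (so that $\DT_\S$ is the category of symmetric spectra, orthogonal spectra, $\Gamma$-spaces, or $\mathcal{W}$-spaces and $\S(0)=S^0$ by Properties~\ref{property1} and~\ref{property2}) gives the concluding assertion about coalgebras over the sphere spectrum.
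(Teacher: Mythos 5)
Your skeleton is the paper's own: apply Theorem~\ref{thm: prethm} to the adjunction between $R\Smash -$ and $\Ev_0$ (your $L\dashv R'$ is naturally isomorphic to it), with Lemma~\ref{lem: coalg in set} giving condition~\ref{cond2 of thm}. But both substantive steps have genuine gaps. For the product half of condition~\ref{cond1 of thm}, your claim that Properties~\ref{property1} and~\ref{property2} make morphisms $e\otimes f\to 0$ ``rigid enough that the coend is identified with the $(0,0)$ summand'' is false: as stated, your argument uses nothing about the coalgebra structure, so it would apply to an arbitrary $R$-module $D$, and the paper's Examples~\ref{ex: smash in W space} and~\ref{ex: smash in gamma} exhibit $\W$-spaces and $\Gamma$-spaces $X$ (the infinite symmetric product model of $H\mathbb{Z}$) with $(X\Smash X)(0)\ncong X(0)\Smash X(0)$, even though those categories satisfy both Properties and $X\Smash_{\S}X\cong X\Smash X$ there. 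The collapse onto the $(0,0)$ summand is automatic for symmetric and orthogonal spectra, but for $\W$-spaces and $\Gamma$-spaces --- which are within the theorem's scope --- it genuinely requires the coalgebra structure: in the paper's Lemma~\ref{lem: smash of sphere coalgebra}, surjectivity of $C(0)\Smash C(0)\to (C\Smash_R C)(0)$ is deduced from the epimorphism lemma (every $c\in C(e)$ equals $\sigma(r\Smash c')$ with $c'\in C(0)$, so every generator $\alpha\Smash c_1\Smash c_2$ of the coend is hit), while counitality supplies injectivity --- the one part you did identify correctly.

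For condition~\ref{cond3 of thm} you correctly flag the epimorphism statement as the main obstacle, but you leave it unresolved, and your intended logical order is backwards: in the paper, Lemma~\ref{lem: epimorphism} is proved first, independently of condition~\ref{cond1 of thm}, and is then an input to it. The missing idea is a contradiction via counitality. One first shows, using Lemma~\ref{lem: smash in simp set} together with Properties~\ref{property1} and~\ref{property2}, that inside $(C\Smash C)(d)$ (for $d\ncong 0,*$) no element of the summand $C(d)\Smash C(0)$ is identified with one of $C(0)\Smash C(d)$; the only identifications between these two summands in $(C\Smash_R C)(d)$ pass through the $R$-action $\sigma\colon R(d)\Smash C(0)\to C(d)$. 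Hence if some $c\in C(d)$ lay outside the image of $\sigma$, its preimages under $\id_C\Smash\varepsilon$ and $\varepsilon\Smash\id_C$ would be disjoint subsets of $(C\Smash_R C)(d)$, while the counit diagram forces the single element $\Delta(c)$ to lie in both --- a contradiction. This gives actual levelwise surjectivity, not merely density, so the dense-image characterization of epimorphisms you invoke is unnecessary; only the easy implication that levelwise surjections are epimorphisms is used. Until these two steps are filled in along these lines, the proposal is a correct plan with the two decisive lemmas unproved, and the rigidity argument you propose for the first one cannot be repaired without bringing in the coalgebra structure.
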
 

\begin{rem}\label{rem-cof}
We show here that, in symmetric spectra or orthogonal spectra, any cofibration of commutative $\bS$-algebras is the identity at level 0. For a cofibrant object $R$, the unit map $\bS \to R$ is a cofibration and hence it follows that $R(0) \cong \bS(0) = S^0$. {Thus, in symmetric spectra or orthogonal spectra, any cofibrant commutative $\bS$-algebra $R$ has the property that $R(0) \cong S^0$. }

Recall the free commutative $\bS$-algebra functor, denoted $\bC(X) = \bigvee_{n\geq 0} X^{\sm n} / \Sigma_n$, with $X^{\sm 0} = \bS$. 
The cofibrations in the model structure on commutative $\bS$-algebras from~\cite[15.1]{MMSS} are built by applying $\bC$ to the positive cofibrations defined in~\cite[14.1]{MMSS}. 
There it is noted that these positive cofibrations are homeomorphisms at level 0. 
In fact, by~\cite[6.2]{MMSS}, any generating positive cofibration is of the form $F_k i$ with $i$ an $h$-cofibration and $k > 0$. 
Since $k > 0$, level 0 of these maps is the identity map on the trivial one point space~$*$.  
Since $X{\sm n}(0) \cong X(0){\sm n}$, the only contribution to level 0 for the map $\bC F_k i$ is the identity map on $S^0$ coming from the summand with $n=0$.  
Thus, any cofibration of commutative $\bS$-algebras is the identity on level 0. 

This also holds in the $\bS$-model structure from~\cite[3.2]{shipley-convenient} for commutative $\bS$-algebras. There a cofibration of commutative $\bS$-algebras is shown to be an underlying positive $\bS$-cofibration of $\bS$-modules by~\cite[4.1]{shipley-convenient}. 
These maps are isomorphisms in level 0 by definition~\cite[Section 3]{shipley-convenient}.  
Elsewhere these model structures are referred to as the "flat" or "positive flat" model structures~\cite{schwede-book}.
\end{rem}

We wish to prove Theorem \ref{thm: diag spectra coco}. 
We will use Theorem \ref{thm: prethm} for the following adjoint pair of functors. Given a commutative monoid $R$ in $\T^{\D}$, the free $R$-module functor is the left adjoint to the evaluation at the unit object $0$ of $\D$ (see \cite[Definition 1.3]{MMSS}):
\[
\begin{tikzcd}
R\Smash -: \T \ar[shift left]{r} & \DT_R:\Ev_0.\ar[shift left]{l}
\end{tikzcd}
\]
Equation (\ref{equ: colimit coend}) shows that there is a natural map $X(0)\Smash Y(0)\rightarrow (X\Smash_R Y)(0)$, for any $X$ and $Y$ in $\DT_R$, which makes the functor $\Ev_0:\DT_R\rightarrow \T$ lax symmetric monoidal. We have already shown in Lemma \ref{lem: coalg in set} that condition \ref{cond2 of thm} of Theorem \ref{thm: prethm} is verified by $(\T, \Smash, S^0)$. So we only need to know when conditions \ref{cond1 of thm} and \ref{cond3 of thm} of Theorem \ref{thm: prethm} are verified. 

Let us first investigate the condition \ref{cond3 of thm} of Theorem \ref{thm: prethm}.
Notice that a map $f:X\rightarrow Y$ in $\T^{\D}$ is an epimorphism if and only if, for any object $d$ in $\D$, the map $f:X(d)\rightarrow Y(d)$ is an epimorphism in $\T$, i.e. a surjective based continuous map. 
 
\begin{lem}\label{lem: epimorphism}
Let $\D$ satisfy Properties {\normalfont\ref{property1}} and {\normalfont\ref{property2}}.
Let $(R,\phi, \lambda)$ be a commutative monoid in $\T^{\D}$. Let $(C, \Delta, \varepsilon)$ be an $R$-coalgebra in $\T^{\D}$. 
Then the natural map $R\Smash C(0) \longrightarrow C$ is an epimorphism of $R$-modules in $\T^{\D}$.
\end{lem}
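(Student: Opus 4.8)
The plan is to reduce the statement to a level-wise surjectivity and then exploit the comonoid structure of $C$ in $\DT_R$. Since the forgetful functor $\DT_R\to\T^{\D}$ is faithful, a morphism of $R$-modules is an epimorphism in $\DT_R$ as soon as it is an epimorphism in $\T^{\D}$, and by the observation preceding the lemma the latter holds exactly when the map is surjective at each object $d$ of $\D$. I would then identify the counit $R\Smash C(0)\to C$ of the adjunction $R\Smash-\dashv\Ev_0$ explicitly: its transpose is $\id_{C(0)}$, and at an object $d$ it is the module action $\sigma\colon R(d)\Smash C(0)\to C(d)$ coming from the isomorphism $d\cong d\otimes 0$. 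At $d=0$ the unit axiom for the $R$-action gives $\sigma(\lambda(1)\Smash c)=c$, so surjectivity is immediate there; the real content is surjectivity of $\sigma$ at every $d$.

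For a general $d$, I would produce a preimage of a given $x\in C(d)$ using the comultiplication $\Delta\colon C\to C\Smash_R C$ and the counit $\varepsilon\colon C\to R$. A single application of $\Delta$ followed by a counit axiom already writes $x$ in the image of $\sigma$ on some $R(e)\Smash C(f)$, but with the surviving $C$-factor at an uncontrolled level $f$, so one step does not suffice. Instead I would apply the $k$-fold comultiplication, landing $\Delta^{(k)}_d(x)$ in the coend describing the $k$-fold smash at $d$, i.e. a colimit over morphisms $e_1\otimes\cdots\otimes e_k\to d$ of the spaces $C(e_1)\Smash\cdots\Smash C(e_k)$; choose a representative $(g\colon e_1\otimes\cdots\otimes e_k\to d,\ a_1\Smash\cdots\Smash a_k)$ with $a_i\in C(e_i)$. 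Applying $\varepsilon$ to every factor but the $j$-th and invoking counitality recovers
\[
x = C(g)\big(\sigma^{(k-1)}(\varepsilon(a_1)\Smash\cdots\Smash a_j\Smash\cdots\Smash\varepsilon(a_k))\big),
\]
with $\varepsilon(a_i)\in R(e_i)$ and the retained $a_j\in C(e_j)$. If the retained factor can be taken at level $0$, then multiplicativity of $R$ collapses the remaining $R$-factors into a single element, naturality of $\sigma$ in $\D$ absorbs $g$, and $x$ appears as $\sigma(r\Smash a_j)$ with $r\in R(d)$ and $a_j\in C(0)$, exhibiting $x$ in the image of $R(d)\Smash C(0)\to C(d)$.

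The main obstacle is precisely this reduction to a level-$0$ tensor factor, and this is where Properties \ref{property1} and \ref{property2} must enter. The faithful strong symmetric monoidal sphere functor $\S\colon\D\to\T$ turns a contributing morphism $e_1\otimes\cdots\otimes e_k\to d$ into a based map $\S(e_1)\Smash\cdots\Smash\S(e_k)\to\S(d)$, and the contrapositive of Property \ref{property2} gives $\S(e_i)\not\cong S^0$ whenever $e_i\not\cong 0$ (and $\S(e_i)\cong *$ forces the term to be trivial). Comparing with $\S(d)$ should bound the number of factors with $e_i\not\cong 0$, so that for $k$ large relative to $d$ some $e_j\cong 0$. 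Making this finiteness precise and compatible with the coend relations, uniformly across symmetric spectra, orthogonal spectra, $\Gamma$-spaces and $\mathcal{W}$-spaces, is the delicate point; once it is in place, surjectivity of $\sigma$ at every level follows, so $R\Smash C(0)\to C$ is an epimorphism of $R$-modules, which is exactly condition \ref{cond3 of thm} of Theorem \ref{thm: prethm}.
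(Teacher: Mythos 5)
There is a genuine gap, and it sits exactly at the point you yourself flag as ``the delicate point'': the claim that comparing $\S(e_1)\Smash\cdots\Smash\S(e_k)\to\S(d)$ with $\S(d)$ bounds the number of factors with $e_i\ncong 0$, so that for $k$ large some slot $e_j\cong 0$. This pigeonhole is valid for symmetric and orthogonal spectra, where $\D(e_1\otimes\cdots\otimes e_k,d)$ is nontrivial only when the cardinalities (resp.\ dimensions) of the $e_i$ sum to that of $d$. It fails for $\Gamma$-spaces and $\mathcal{W}$-spaces, where the indexing category contains \emph{all} based maps, not just isomorphisms: in the $\Gamma$-space case, taking every $e_i=2_+$ gives $e_1\otimes\cdots\otimes e_k\cong (2^k)_+$, and $\D((2^k)_+,d)$ is nontrivial for every $k$ and every $d$, so the coend computing $(C^{\Smash_R k})(d)$ has summands all of whose slots are at levels $\ncong 0$ no matter how large $k$ is; the same happens in $\mathcal{W}$ using collapse and constant maps. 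Property \ref{property2} gives no bound here: it only says $\S(e_i)\ncong S^0$ and $\S(e_i)\ncong *$ when $e_i\ncong 0,*$, which in no way obstructs based maps $\S(e_1)\Smash\cdots\Smash\S(e_k)\to\S(d)$. Moreover, you do not get to choose which summand the coend representative of $\Delta^{(k)}(x)$ lives in, and nothing forces a representative with a level-$0$ slot to exist. The paper's own Examples \ref{ex: smash in W space} and \ref{ex: smash in gamma} (the $\mathcal{W}$- and $\Gamma$-space models of $H\mathbb{Z}$) exhibit precisely this phenomenon: summands with all factors $\ncong 0$ that survive in the smash product without being identified into summands having a level-$0$ factor. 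So your argument, even with the bookkeeping completed, covers only two of the four cases the lemma must handle.

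The paper's proof avoids controlling representatives of the comultiplication altogether, uses only a \emph{single} $\Delta$, and argues by contradiction; this is the repair you would need. Suppose $\sigma\colon R(d)\Smash C(0)\to C(d)$ is not surjective and pick $c$ outside its image. Counitality gives $(\id_C\Smash\varepsilon)\Delta(c)=c=(\varepsilon\Smash\id_C)\Delta(c)$, and when $c$ is viewed in $C\Smash_R R$ (resp.\ $R\Smash_R C$) its preimage under $\id_C\Smash\varepsilon$ (resp.\ $\varepsilon\Smash\id_C$) lies in the contribution of the summand $C(d)\Smash C(0)$ (resp.\ $C(0)\Smash C(d)$) of $(C\Smash_R C)(d)$. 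Lemma \ref{lem: smash in simp set} together with Properties \ref{property1} and \ref{property2} shows these two summands are never identified in the internal smash $(C\Smash C)(d)$ (an identification would force $d\cong 0$ or $d\cong *$), and any identification between them inside $C\Smash_R C$ must pass through the $R$-action, which would exhibit $c$ in the image of $\sigma$. Hence the two preimages of $c$ are disjoint, yet $\Delta(c)$ must lie in both --- a contradiction. The moral of the comparison: your constructive approach needs a structural finiteness of $\D$ that simply is not part of the hypotheses, whereas the paper's two-sided counitality argument extracts the needed rigidity from the coalgebra structure itself and so works uniformly for all $\D$ satisfying Properties \ref{property1} and \ref{property2}.
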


The main idea of the proof is to look at the consequences of counitality of an $R$-coalgebra $C$ with respect to the identifications in the smash product $C\Smash_R C$. 
Before proving the lemma, we need the following result.

\begin{lem}\label{lem: smash in simp set}
Let $X$ be an object of $(\T, \Smash, S^0)$.
\begin{enumerate}[label=\upshape\textbf{(\roman*)}]
\item\label{item1 lem} If we are given pointed maps $f:S^0\rightarrow X$ and $g:X\rightarrow S^0$ such that the induced map:
\[
\begin{tikzcd}
S^0\Smash X \ar{r}{f\Smash g}[swap]{\cong} & X\Smash S^0
\end{tikzcd}
\]
is an isomorphism, then either $X\cong *$ and the morphisms $f$ and $g$ are the trivial maps, or $X\cong S^0$ and $f$ and $g$ are isomorphisms.
\item\label{item2 lem} Suppose we are given pointed sets $Y$ and $Z$ together with pointed maps $f:S^0\rightarrow Y$, $g:X\rightarrow Z$, $f':X\rightarrow Y$ and $g':S^0\rightarrow Z$ such that the following diagram commutes: 
\[
\begin{tikzcd}
S^0\Smash X \ar{r}{f\Smash g}\ar{dr}[swap]{\cong} & Y\Smash Z \ar{d}{\alpha} & X\Smash S^0\ar{l}[swap]{f'\Smash g'}\ar{dl}{\cong}\\
& X &
\end{tikzcd}
\]
for a pointed map $\alpha: Y\Smash Z \rightarrow X$. Then either $X\cong *$ and the morphisms $g$ and $f'$ are trivial, or $X\cong S^0$ and the composite $\begin{tikzcd}[column sep= small]
S^0\cong S^0\Smash S^0 \ar{r}{f\Smash g'} & Y\Smash Z\ar{r}{\alpha} & X
\end{tikzcd}$ is an isomorphism.
\end{enumerate}
\end{lem}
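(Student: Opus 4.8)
Both parts are finite point-set computations, in the spirit of the essential-surjectivity argument in Lemma \ref{lem: coalg in set}, relying throughout on the fact that every element of a smash product of pointed sets is either the basepoint or uniquely of the form $y \Smash z$ with $y \neq *$ and $z \neq *$. For part \ref{item1 lem} I would unwind the hypothesis directly: under the canonical isomorphisms $S^0 \Smash X \cong X \cong X \Smash S^0$, the map $f \Smash g$ becomes $x \mapsto f(1)$ when $g(x) = 1$ and $x \mapsto *$ when $g(x) = 0$, so its image lies in $\{f(1), *\}$. Since this map is assumed to be a bijection onto $X$, the set $X$ has at most two points; a finite weak Hausdorff space being discrete, either $X \cong *$ (and $f, g$ are trivial) or $X \cong S^0$ (and, tracing the bijection, $f$ and $g$ are forced to be isomorphisms).

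For part \ref{item2 lem}, write $a = f(1) \in Y$ and $b = g'(1) \in Z$. Using that $f$ and $g'$ are pointed, the two commuting triangles unwind, via the same unit isomorphisms, to the relations
\[
\alpha(a \Smash g(x)) = x \qquad \text{and} \qquad \alpha(f'(x) \Smash b) = x \quad \text{for all } x \in X.
\]
From these I would first record the easy consequences, exactly as around \eqref{equ: comonoid in set}: the assignments $z \mapsto \alpha(a \Smash z)$ and $y \mapsto \alpha(y \Smash b)$ retract $g$ and $f'$, so both are split monomorphisms; and since $\alpha$ is pointed, $g(x) = *$ forces $x = \alpha(a \Smash *) = *$, and likewise $f'(x) = * \Rightarrow x = *$. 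Hence for $x \neq *$ both $g(x)$ and $f'(x)$ are non-basepoints, and in the case $X \not\cong *$ we must have $a \neq *$ and $b \neq *$, for otherwise one relation collapses $X$ to $*$.

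The crux, which I expect to be the main obstacle, is to promote these relations to the coincidences $f'(x) = a$ and $g(x) = b$ for every $x \neq *$. Because a smash of pointed sets is rank-one, it suffices to exhibit $a \Smash g(x)$ and $f'(x) \Smash b$ as representing one and the same element of $Y \Smash Z$ --- concretely, the single comultiplication value of the coalgebra at the relevant object --- after which uniqueness of the rank-one representative yields both equalities. Granting this, injectivity of $f'$ forces it to be constant with value $a$ on $X \setminus \{*\}$, so $X \setminus \{*\}$ has at most one element and $X$, being finite weak Hausdorff, is discrete; thus $X \cong *$ or $X \cong S^0$. The two displayed relations by themselves leave $\alpha$ underdetermined on $Y \Smash Z$, so this is exactly the step that must invoke the full comultiplication datum together with the unit sections $f, g'$, and it is also what makes the conclusion strictly stronger than that of Lemma \ref{lem: coalg in set}.

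Finally I would settle the two cases. If $X \cong *$ then $g$ and $f'$ are the trivial maps by the above. If $X \cong S^0$, with unique non-basepoint $x_0$, then the second relation gives $\alpha(a \Smash b) = \alpha(f'(x_0) \Smash b) = x_0 \neq *$, so the composite $S^0 \cong S^0 \Smash S^0 \xrightarrow{f \Smash g'} Y \Smash Z \xrightarrow{\alpha} X$ carries the non-basepoint to $x_0$ and is therefore an isomorphism, just as in part \ref{item1 lem}.
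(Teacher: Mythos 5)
Your part \ref{item1 lem} is correct and is essentially the paper's argument: the image of $f\Smash g$ is contained in $\{*,\, f(1)\Smash 1\}$, so surjectivity forces $X$ to have at most one non-basepoint, and bijectivity then forces $f$ and $g$ to be isomorphisms when $X\ncong *$.

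For part \ref{item2 lem}, what you flag as the crux is a genuine gap in your write-up: you never derive $f'(x)=f(1)$ and $g(x)=g'(1)$ from the stated hypotheses, and your proposed justification invokes ``the single comultiplication value of the coalgebra,'' which is simply not among the data of the lemma (there is no coalgebra in the statement). As a proof of the lemma as written, your argument is therefore incomplete. However, your diagnosis that the two triangle identities $\alpha(f(1)\Smash g(x))=x$ and $\alpha(f'(x)\Smash g'(1))=x$ leave $\alpha$ underdetermined is exactly right, and it is sharper than the paper's own proof. The paper closes this step by choosing a preimage $y\Smash z$ of $x$ under $\alpha$ and asserting $f(1)\Smash g(x)=y\Smash z=f'(x)\Smash g'(1)$; since both $f(1)\Smash g(x)$ and $f'(x)\Smash g'(1)$ are themselves preimages of $x$, this assertion amounts to assuming that $x$ has a \emph{unique} preimage, i.e.\ injectivity of $\alpha$, which is not hypothesized and is precisely what needs proof. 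In fact no argument from the stated hypotheses can exist, because part \ref{item2 lem} is false as stated: take $X=\{*,x_1,x_2\}$, $Y=\{*,a,y_1,y_2\}$, $Z=\{*,b,z_1,z_2\}$, with $f(1)=a$, $g'(1)=b$, $g(x_i)=z_i$, $f'(x_i)=y_i$, and define $\alpha$ by
\[
\alpha(a\Smash z_i)=x_i=\alpha(y_i\Smash b), \qquad \alpha(w)=* \ \text{ for all other } w\in Y\Smash Z.
\]
Both triangles commute, all maps are pointed, yet $X$ is neither $*$ nor $S^0$, and $f'(x_i)=y_i\neq a$. So the step you balked at is not a missing trick; it is unprovable at this level of generality, and the paper's proof papers over it.

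The practical upshot is that the lemma can only be rescued by importing the extra rigidity present where it is used, namely in the proof of Lemma \ref{lem: epimorphism}: there $Y=\S(e)$, $Z=\S(f)$, the maps $f,g,f',g'$ and $\alpha$ all come from applying the faithful strong symmetric monoidal functor $\S:\D\rightarrow\T$ to morphisms of $\D$, and the elements being identified are constrained by the counit of an actual $R$-coalgebra. Your instinct to reach for that structure is the right one; a correct treatment would either build those hypotheses into the statement of the lemma or absorb the point-set analysis directly into the proof of Lemma \ref{lem: epimorphism}, tracking elements of $C$ rather than only the shape of the diagrams in $\D$.
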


\begin{proof}
The proof is purely a point set argument.
For \ref{item1 lem}, assume $X\ncong *$. Let $x\neq *$ in $X$. The commutativity of the diagram:
\[
\begin{tikzcd}
S^0\Smash X \ar{rr}{f\Smash g}[swap]{\cong}\ar{dr}[swap]{\mathsf{id}_{S^0}\Smash g} & & X\Smash S^0\\
& S^0\Smash S^0,\ar{ur}[swap]{f\Smash \mathsf{id}_{S^0}} &
\end{tikzcd}
\]
implies that $f\Smash \mathsf{id}_{S^0}$ is surjective. Thus any element $x\Smash 1$ in $X\Smash S^0$ is of the form $f(1)\Smash 1$. This implies $X\cong S^0$ .

For \ref{item2 lem}, assume again $X\ncong *$. The map $\alpha$ is surjective as $\alpha \circ (f\Smash g)$ is an isomorphism. For an element $x\neq *$ in $X$, denote by $y\Smash z$ an element in $Y\Smash Z$ such that $\alpha(y\Smash z)=x$. Then we get:
\[
f(1)\Smash g(x) = y\Smash z = f'(x) \Smash g'(1).
\]
Thus $y=f(1)=f'(x)$ and $z=g'(1)=g(x)$. The desired composite is an isomorphism. Whence $X\cong S^0$.
\end{proof}

\begin{proof}[Proof of Lemma \ref{lem: epimorphism}]
We need to prove that the continuous structure map $\sigma: R(d)\Smash C(0) \rightarrow C(d)$ is surjective for each $d$ in $\D$.
If $d=0$, since the composition $C(0)\cong S^0\Smash C(0) \longrightarrow R(0)\Smash C(0) \stackrel{\sigma}\longrightarrow C(0)$ is the identity, then $\sigma: R(0)\Smash C(0) \rightarrow C(0)$ is surjective. If $d=*$, then the map $\sigma:R(*)\Smash C(0)\longrightarrow C(*)$ is trivial, as the functors $C$ and $R$ are pointed. 

Let us assume now that $d$ is an object in $\D$, where $d\ncong 0,*$.
The above definition of the smash product of $R$-modules in $\T^{\D}$ leads to the explicit definition of $C\wedge_R C$ as a quotient space of:
\begin{equation}\label{equ C wedge C in diag}
(C\wedge_R C)(d) = \left.\left(\bigvee_{(e,f)\in \D\times \D} \D(e\otimes f, d)\Smash C(e) \Smash C(f)\right)\right/\sim_R,
\end{equation}
for any $d$ in $\D$. 
The relations here are induced by the internal smash product (see coequalizer \ref{equ: colimit coend} {for $X=Y=C$}) and by the $R$-action via the structure maps $\sigma : R(e)\Smash C(f) \rightarrow C(e\otimes f)$.  
Notice that the natural isomorphisms $d\otimes 0 \stackrel{\cong}\rightarrow d$ and $0\otimes d \stackrel{\cong}\rightarrow d$ imply that there is at least one copy of $C(d)\Smash C(0)$ and $C(0)\Smash C(d)$ in $(C\Smash_R C)(d)$. 

First let us show that an element in $C(d)\Smash C(0)$ is not identified with an element in $C(0)\Smash C(d)$ in ${(C\Smash C)}(d)$ via the coequalizer (\ref{equ: colimit coend}). 
For this matter, assume there are objects $e$ and $f$ in $\D$ that fit in either of the following commutative diagrams in $\D$:
\[
\begin{tikzcd}
d\otimes 0 \ar{r}\ar{dr}[swap]{\cong} & e\otimes f \ar{r}\ar{d} & 0\otimes d \ar{dl}{\cong} & d\otimes 0 \ar{r}\ar{dr}[swap]{\cong} & e\otimes f \ar{d} & 0\otimes d \ar{dl}{\cong}\ar{l}\\
& d, & & & d. &
\end{tikzcd}
\]
Using Property \ref{property1}, we apply the functor $\S:\D\rightarrow \T$ and get the commutative diagrams in $\T$:
\[
\begin{tikzcd}[column sep=10]
\S(d)\Smash S^0 \ar{r}\ar{dr}[swap]{\cong} & \S(e)\Smash \S(f) \ar{r}\ar{d} & S^0\Smash \S(d) \ar{dl}{\cong} & \S(d)\Smash S^0 \ar{r}\ar{dr}[swap]{\cong} & \S(e)\Smash \S(f) \ar{d} & S^0\Smash \S(d) \ar{dl}{\cong}\ar{l}\\
& \S(d), & & & \S(d). &
\end{tikzcd}
\]
Then, using Lemma \ref{lem: smash in simp set} and Property \ref{property2}, we get either $d\cong 0$ or $d\cong *$ in both cases, which is a contradiction. Thus elements in $C(d)\Smash C(0)$ are not identified with elements in $C(0)\Smash C(d)$ in ${(C\Smash C)}(d)$.

Now we consider $C\Smash_R C$ instead of $C\Smash C$.
Some identifications do occur in the pointed space ${(C\Smash_R C)}(d)$ via the $R$-action structure maps. 
Recall that since $C$ is counital, we have the commutative diagram: 
\begin{equation}\label{diag: counital C smash C}
\begin{tikzcd}
C\Smash_R C \ar{r}{\mathsf{id}_C\Smash \varepsilon} & C\Smash_R R \cong C \cong R\Smash_R C & C\Smash_R C \ar{l}[swap]{\varepsilon \Smash \mathsf{id}_C}\\
& C.\ar[equals]{u}\ar[bend left]{ul}{\Delta} \ar[bend right]{ur}[swap]{\Delta} &
\end{tikzcd}
\end{equation}
The commutativity shows that the maps $\varepsilon \Smash \mathsf{id}_C$ and $\mathsf{id}_C\Smash \varepsilon$ are epimorphisms in $\T^{\D}$, hence surjective once evaluated at $d$. 
Suppose now that the structure map $\sigma: R(d)\wedge C(0)\rightarrow C(d)$ is not surjective. 
Consider an element $c$ in $C(d)$ not in the image of $\sigma$. 
On the one hand, we can view $C$ as $C \wedge_R R$ and so, if we consider $c$ in $C(d)$ as $c\Smash \lambda(1)$ in $C(d)\Smash R(0)$ of $(C\Smash_R R)(d)$, then we can take its preimage under the map:
\[\mathsf{id}_C\wedge \varepsilon: (C\wedge_R C)(d) \longrightarrow (C\wedge_R R)(d).\] 
The preimage lies in the contribution from $C(d)\wedge C(0)$ in (\ref{equ C wedge C in diag}).
On the other hand, if we view $C$ as $R\Smash_R C$, consider $c$ in $C(d)$ as $\lambda(1)\Smash c$ in $R(0)\Smash C(d)$ of $(R\Smash_R C)(d)$ and take its preimage under the map: \[\varepsilon \wedge \mathsf{id}_C: (C\wedge_R C)(d) \longrightarrow (R\wedge_R C)(d),\] it belongs to the copy $C(0)\wedge C(d)$ in (\ref{equ C wedge C in diag}). Since we have supposed that $\sigma$ is not surjective and that $c$ does not belong to the image of $\sigma$ in $C(d)$, we get that the two preimages ${(\mathsf{id}_C\wedge \varepsilon)}^{-1}(c)$ and ${(\varepsilon\wedge \id_C)}^{-1}(c)$ in  ${(C\wedge_R C)}(d)$ are disjoint from each other.
But then the commutativity of the diagram (\ref{diag: counital C smash C}) forces two different values of $\Delta(c)$ for the map $\Delta :C\longrightarrow C\wedge_R C$.
We get a contradiction, thus $\sigma : R(d)\wedge C(0)\rightarrow C(d)$ must be surjective.
\end{proof}

\begin{rem}
Notice that we did \emph{not} require $R(0)\cong S^0$ for Lemma \ref{lem: epimorphism}. It is valid for any commutative monoid $R$ in $\T^{\D}$. This suggests that, even though $R$-coalgebras in $\T^{\D}$ are not necessarily cocommutative when $R(0)\ncong S^0$, there are restrictions on the possibilities of $R$-coalgebras in $\T^{\D}$.
\end{rem}

\begin{rem}\label{rem: epi in symmetric spectra}
In the proof of Lemma \ref{lem: epimorphism} above, choosing a particular category $\D$ might simplify the reader's understanding. For instance, if we choose $\D=\ssigma_+$ as in \cite[Example 4.2]{MMSS}, where $\ssigma$ is the category of finite sets and their permutations, we obtain the usual category of symmetric spectra $\SpS:=\mathsf{Sp}^{\ssigma_+}_\S$ as in \cite{SS}. Lemma \ref{lem: epimorphism}, for $\S$-coalgebras in $\SpS$, appeared in an early version of~\cite{HS.cothh} before being developed further here.
The equation (\ref{equ: colimit coend}) of the internal smash product of $\ssigma_+$-spaces $X$ and $Y$ at the $n$th level simplifies to:
\[
(X\Smash Y)_n=\bigvee_{p+q=n} \Sigma_n^+ \wedge_{\Sigma_p\times \Sigma_q} X_p\wedge Y_q,
\]
where $\Sigma_n$ is the symmetric group on $n$ letters. Thus we see directly that the copies $X_n\Smash Y_0$ and $X_0\Smash Y_n$ are never identified in the internal smash product, but they can be when considering $(X\Smash_\S Y)_n$. In particular we do not need Lemma \ref{lem: smash in simp set} here.
\end{rem}

In order to prove Theorem \ref{thm: diag spectra coco}, we now investigate the condition \ref{cond1 of thm} of Theorem \ref{thm: prethm} with the functor $\Ev_0:\DT_R\rightarrow \T$.
We need the next two results.

\begin{lem}\label{lem: homeomorphism}
Let $X$ and $Y$ be pointed spaces.
For any commutative monoid $(R, \phi, \lambda)$ in $\T^{\D}$, and any object $d$ in $\D$, we have a homeomorphism of pointed spaces: \[R(d)\Smash X \Smash Y \cong \left[ (R\Smash X) \Smash_R (R\Smash Y)\right] (d).\]
\end{lem}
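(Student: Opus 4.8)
The plan is to recognise both sides as the value at $d$ of a free $R$-module and to exploit the fact that the free $R$-module functor $R\Smash-:\T\rightarrow\DT_R$ is strong symmetric monoidal.

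First I would make the free $R$-modules explicit. Since $R\Smash-$ is the left adjoint of $\Ev_0$, unwinding the adjunction (or, equivalently, applying the coYoneda lemma to the coend defining the free $\D$-space $F_0 X$ on $X$ at the unit object $0$) identifies $R\Smash X$ with the $\D$-spectrum $d\mapsto R(d)\Smash X$, whose $R$-action
\[
R(e)\Smash\big(R(d)\Smash X\big)\xrightarrow{\ \phi\Smash\id\ }R(e\otimes d)\Smash X
\]
is induced by the product $\phi$ of $R$; likewise $(R\Smash Y)(d)\cong R(d)\Smash Y$. In particular the right-hand side $R(d)\Smash X\Smash Y$ is exactly the value at $d$ of the free $R$-module $R\Smash(X\Smash Y)$, so it suffices to produce a natural isomorphism of $R$-modules
\[
(R\Smash X)\Smash_R(R\Smash Y)\cong R\Smash(X\Smash Y)
\]
and then evaluate it at $d$.

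To obtain this isomorphism I would argue that $R\Smash-$ is strong monoidal, which is the standard phenomenon that the free module functor over a commutative monoid preserves the monoidal product. Concretely, I would first expand the internal smash product $(R\Smash X)\Smash(R\Smash Y)$ using the coend (\ref{equ: colimit coend}), whose $(e,f)$-summands are $\D(e\otimes f,d)\Smash R(e)\Smash X\Smash R(f)\Smash Y$, and then impose the $R$-action coequalizer defining $\Smash_R$. The two structure maps being coequalized are both given by multiplying into an $R$-factor via $\phi$, so coequalizing merges the two separate copies $R(e)$ and $R(f)$ into a single one; after applying the coYoneda lemma to collapse the coend over $e\otimes f\rightarrow d$ down to $R(d)$, what remains is precisely $R(d)\Smash X\Smash Y$. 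Commutativity of $R$ is what guarantees that this identification is well defined and symmetric.

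The hard part will be the bookkeeping in this second step: one must track both legs of the $\Smash_R$-coequalizer on the free modules, check that their coequalizer amounts to the single multiplication by $\phi$, and verify that the resulting continuous bijection is genuinely a homeomorphism and not merely a set-level isomorphism. Since all colimits are formed in weak Hausdorff $k$-spaces and the coends run over the locally small based category $\D$, this point-set verification is routine but needs care; once it is in place, evaluating the displayed isomorphism at $d$ and combining with the first step yields the claimed homeomorphism $R(d)\Smash X\Smash Y\cong\big[(R\Smash X)\Smash_R(R\Smash Y)\big](d)$.
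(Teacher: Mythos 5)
Your reduction is the same as the paper's: rewrite both sides as values at $d$ of free $R$-modules, so that everything follows from a natural isomorphism of $R$-modules $(R\Smash X)\Smash_R(R\Smash Y)\cong R\Smash(X\Smash Y)$. Where you genuinely differ is in how that isomorphism is obtained. The paper's whole proof is that it follows ``by adjointness of the functor $R\Smash-\colon\T\rightarrow\DT_R$ as in \cite[Lemma 21.3]{MMSS}'': both sides corepresent the same functor on $\DT_R$, namely
\[
Z\longmapsto\T\bigl(X\Smash Y,Z(0)\bigr)\cong\T\bigl(X,\mathrm{Map}_{\T}(Y,Z(0))\bigr),
\]
using the free--forget adjunction twice and the hom--tensor adjunction for $\Smash_R$, so the Yoneda lemma produces the isomorphism and one simply evaluates at $d$. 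Besides brevity, this buys exactly the thing you worry about at the end: an isomorphism produced by corepresentability is an isomorphism in $\DT_R$, hence automatically a levelwise homeomorphism, so no continuous-bijection-versus-homeomorphism check ever arises. Your route is instead an explicit coend/coequalizer computation; it is more laborious, but it is viable and has the merit of exhibiting the isomorphism concretely, which is close to what the paper actually needs downstream (the proof of Lemma~\ref{lem: smash of sphere coalgebra} uses the explicit location of $C(0)\Smash C(0)$ inside $(C\Smash_R C)(0)$).

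One concrete warning about your key step, though. The appeal to ``the coYoneda lemma to collapse the coend over $e\otimes f\to d$ down to $R(d)$'' is not correct as stated: that coend is indexed over $\D\times\D$ through $\otimes$, not over $\D$, so it is not an instance of coYoneda, and without the $\Smash_R$-coequalizer the collapse is genuinely false --- the paper's Examples~\ref{ex: smash in W space} and~\ref{ex: smash in gamma} give $\D$-spaces with $(X\Smash Y)(0)\ncong X(0)\Smash Y(0)$, so $\Ev_0$ is not strong monoidal on the internal smash alone. What makes your merging work is the module structure: using the coequalizer relations together with the unit $\lambda\colon S^0\to R(0)$, every element of every $(e,f)$-summand is identified with an element in the image of the summand $\D(d\otimes 0,d)\Smash R(d)\Smash X\Smash R(0)\Smash Y$, and the unit axiom of $R$ identifies that image with $R(d)\Smash X\Smash Y$; the map back is induced by $\phi$ and functoriality of $R$, and associativity, commutativity and unitality of $R$ show the two composites are identities, giving honest mutually inverse continuous maps. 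Organized this way (or, more slickly, by noting that $R\Smash X$ is the tensor of the $R$-module $R$ with the space $X$, that $\Smash_R$ commutes with such tensors in each variable, and that $R\Smash_R R\cong R$), your plan goes through; leaning on coYoneda alone, it does not.
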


\begin{proof}
The proof follows by adjointness of the functor $R\Smash -:\T\rightarrow \DT_R$ as in \cite[Lemma 21.3]{MMSS}.
\end{proof}

\begin{lem}\label{lem: smash of sphere coalgebra}
Let $\D$ satisfy Properties {\normalfont\ref{property1}} and {\normalfont\ref{property2}}.
Let $(R,\phi, \lambda)$ be a commutative monoid in $\T^{\D}$, where $R(0)\cong S^0$. Let $(C, \Delta, \varepsilon)$ be an $R$-coalgebra in $\T^{\D}$.
Then there is a homeomorphism of pointed spaces: \[C(0)\Smash C(0) \cong {(C\Smash_R C)}(0).\]
\end{lem}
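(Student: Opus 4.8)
The plan is to prove that the natural lax monoidal structure map of $\Ev_0$, i.e.\ the map $\eta\colon C(0)\Smash C(0)\to (C\Smash_R C)(0)$ mentioned before Lemma~\ref{lem: epimorphism}, is a homeomorphism, by exhibiting an explicit continuous inverse built from the counit $\varepsilon$. First I would record that $\eta$ is surjective; then I would analyze $\varepsilon$ at level $0$ using counitality; and finally I would assemble the inverse.

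For surjectivity I would invoke Lemma~\ref{lem: epimorphism}: the counit $\pi\colon R\Smash C(0)\to C$ of the adjunction $R\Smash -\dashv \Ev_0$ is an epimorphism of $R$-modules, hence a levelwise surjection. Since $-\Smash_R-$ is computed levelwise as a quotient of wedges of smash products, it preserves levelwise surjections, so $\pi\Smash_R \pi$ is again a levelwise surjection. Evaluating at $0$ and using Lemma~\ref{lem: homeomorphism} with $X=Y=C(0)$ together with $R(0)\cong S^0$ identifies the source as $\bigl[(R\Smash C(0))\Smash_R (R\Smash C(0))\bigr](0)\cong R(0)\Smash C(0)\Smash C(0)\cong C(0)\Smash C(0)$. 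Naturality of $\eta$ in both variables applied to $\pi$, together with the fact that $\pi$ is the identity at level $0$, then shows that $(\pi\Smash_R\pi)(0)$ agrees with $\eta$, so $\eta$ is surjective.

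Next I would pin down the counit at level $0$. Let $u_0,v_0\colon (C\Smash_R C)(0)\to C(0)$ denote the level-$0$ parts of $\id_C\Smash \varepsilon\colon C\Smash_R C\to C\Smash_R R\cong C$ and $\varepsilon\Smash \id_C\colon C\Smash_R C\to R\Smash_R C\cong C$ from diagram~(\ref{diag: counital C smash C}). Naturality of $\eta$ and $R(0)\cong S^0$ give, for $a,b\in C(0)$, that $u_0(\eta(a\Smash b))$ equals $a$ when $\varepsilon_0(b)=1$ and the basepoint otherwise, and symmetrically for $v_0$. Feeding this into the counitality identities $u_0\circ \Delta_0=\id=v_0\circ \Delta_0$, and using surjectivity of $\eta$ to write $\Delta_0(c)=\eta(a\Smash b)$, I would deduce, exactly as in Lemma~\ref{lem: coalg in set}, that $\varepsilon_0(c)=1$ for every $c\neq *$ in $C(0)$ (so $\varepsilon_0^{-1}(0)=\{*\}$) and that $\Delta_0(c)=\eta(c\Smash c)$.

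Finally I would define $w\colon (C\Smash_R C)(0)\to C(0)\Smash C(0)$ by $w(z)=u_0(z)\Smash v_0(z)$; this is continuous, being the composite of $(u_0,v_0)$ with the smash quotient. Using the level-$0$ behavior of $u_0,v_0$ and the fact that $\varepsilon_0(c)=1$ for $c\neq *$, a direct check gives $w\circ\eta=\id$, and since $\eta$ is surjective this forces $\eta\circ w=\id$ as well, so $\eta$ is a homeomorphism with inverse $w$. The main obstacle is the surjectivity step: for a general diagram category $\D$ one cannot control $(C\Smash_R C)(0)$ by a naive coend computation, since—unlike the symmetric spectra case of Remark~\ref{rem: epi in symmetric spectra}—the internal smash at level $0$ can receive genuine contributions from pairs $(e,f)\neq(0,0)$ (for instance in $\mathcal{W}$-spaces). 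It is precisely here that the coalgebra structure, through Lemma~\ref{lem: epimorphism}, and the free-module identification of Lemma~\ref{lem: homeomorphism} are essential.
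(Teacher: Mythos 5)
Your proposal is correct and takes essentially the same approach as the paper: surjectivity of the level-$0$ comparison map comes from Lemma \ref{lem: epimorphism} together with the identification of Lemma \ref{lem: homeomorphism}, and your inverse $z\mapsto u_0(z)\Smash v_0(z)$ is exactly the paper's map $(\id_C\Smash\varepsilon)\Smash(\varepsilon\Smash\id_C)$. Your intermediate observation that counitality plus surjectivity force $\varepsilon_0(c)=1$ for all $c\neq *$ (and $\Delta_0(c)=\eta(c\Smash c)$) simply spells out the step the paper compresses into ``is the identity by the counital property.''
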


\begin{rem}
The result of Lemma \ref{lem: smash of sphere coalgebra} is automatic in most categories of interest, and does not require $C$ to be a coalgebra. Indeed, we have $(X\Smash Y)(0)\cong X(0)\Smash Y(0)$ for any  symmetric spectra or orthogonal spectra $X$ and $Y$. In other words, the functor $\Ev_0:\DT_R\rightarrow \T$ is strong symmetric monoidal when $R(0)\cong S^0$. 
But this is not true in general (for instance in $\mathcal{W}$-spaces {and $\Gamma$-spaces}, {see Examples \ref{ex: smash in W space} and {\ref{ex: smash in gamma}} below}). 
\end{rem}

\begin{proof}
Denote by $\sigma:R\Smash C(0)\rightarrow C$ the natural epimorphic map of Lemma \ref{lem: epimorphism}. Let us consider the map of $R$-modules $\sigma\Smash \sigma : \big((R\Smash C(0)\big) \Smash_R \big(R\Smash C(0)\big) \rightarrow C\Smash_R C$. If we evaluate on the unit object $0$ we get a map:
\[
\sigma\Smash \sigma:C(0)\Smash C(0)\cong \left[\big((R\Smash C(0)\big) \Smash_R \big(R\Smash C(0)\big)\right](0) \longrightarrow (C\Smash_R C)(0),
\]
where the left homeomorphism is induced by Lemma \ref{lem: homeomorphism} and $R(0)\cong S^0$. Recall that $(C\Smash_R C)(0)$ is obtained from $(C\Smash C)(0)$ by coequalizing the $R$-action, so that we get a surjective continuous pointed map $(C\Smash C)(0)\rightarrow (C\Smash_R C)(0)$.
The map $\sigma \Smash \sigma$ factors through the space $(C\Smash C)(0)$:
\[
\begin{tikzcd}
\left[\big((R\Smash C(0)\big) \Smash_R \big(R\Smash C(0)\big)\right](0) \ar{dr}[swap]{\sigma\Smash \sigma} \ar{r}{\sigma\Smash \sigma} & (C\Smash C)(0)\ar{d}\\
& (C\Smash_R C)(0).
\end{tikzcd}
\]
Recall that $(C\Smash C)(0)=\left.\left(\bigvee \D(e\otimes f, 0) \Smash C(e)\Smash C(f)\right)\right/\sim$. From Lemma \ref{lem: epimorphism}, given an element $\alpha\Smash c_1 \Smash c_2$ in $\D(e\otimes f, 0) \Smash C(e) \Smash C(f)$, there exists $r_1 \in R(e)$, $r_2\in R(f)$, $c_1', c_2'\in C(0)$ such that $\sigma (r_1\Smash c_1')=c_1$ and $\sigma(r_2\Smash c_2')=c_2$. Hence $(\sigma\Smash \sigma)(\alpha \Smash r_1 \Smash c_1' \Smash r_2 \Smash c_2')=\alpha\Smash c_1 \Smash c_2$. Thus the map $\sigma \Smash \sigma$ is surjective.

Next we show that the map $\sigma \Smash \sigma:C(0)\sm C(0)\rightarrow (C\sm_R C)(0)$ is injective. Explicitly, the map $\sigma\Smash \sigma$ sends $C(0)\Smash C(0)$ to the copy $\D(0\otimes 0, 0)\Smash C(0)\Smash C(0)$ in $(C\Smash_R C)(0)$ via the natural isomorphism $0\otimes 0\stackrel{\cong}\longrightarrow 0$. Since $C$ is counital, we can consider the maps of $R$-modules:
\[
\begin{tikzcd}
C\Smash_R C \ar{r}{\id_C\Smash \varepsilon} & C\Smash_R R \cong C, & C\Smash_R C\ar{r}{\varepsilon\Smash \id_C}& R\Smash_R C\cong C.
\end{tikzcd}
\] 
Evaluating again at $0$ and factoring the above maps through the product, we get a continuous pointed map:
\[
\begin{tikzcd}[column sep= 2cm]
(C\Smash_R C)(0) \ar{r}{(\id_C\Smash\varepsilon)\Smash (\varepsilon\Smash\id_C)} & C(0) \Smash C(0).
\end{tikzcd}
\]
Explicitly, the above map acts as the identity on the copy of $C(0)\Smash C(0)$ in $(C\Smash_R C)(0)$.  That is, the composite:
\[
\begin{tikzcd}[column sep= 2cm]
C(0)\Smash C(0)\ar{r}{\sigma\Smash \sigma} & (C\Smash_R C)(0) \ar{r}{(\id_C\Smash\varepsilon)\Smash (\varepsilon\Smash\id_C)} & C(0) \Smash C(0).
\end{tikzcd}
\]
is the identity by the counital property. Thus $\sigma\Smash \sigma$ is injective. Therefore $\sigma \Smash \sigma$ induces the desired homeomorphism with inverse $(\id_C\Smash\varepsilon)\Smash (\varepsilon\Smash\id_C)$.
\end{proof}

\begin{proof}[Proof of Theorem \ref{thm: diag spectra coco}]
We apply Theorem \ref{thm: prethm} to the adjoint pair of functors:
\[
\begin{tikzcd}
R\Smash -: \T \ar[shift left]{r} & \DT_R:\Ev_0.\ar[shift left]{l}
\end{tikzcd}\] 
Lemma \ref{lem: smash of sphere coalgebra} proves \ref{cond1 of thm}. Lemma \ref{lem: coalg in set} shows \ref{cond2 of thm}. Finally, Lemma \ref{lem: epimorphism} induces \ref{cond3 of thm}.
\end{proof}

We end this section with three examples.

\begin{ex} 
The functor $R\Smash-:\T\rightarrow \DT_R$ does not lift to an essentially surjective functor on comonoid objects. For instance, when $R$ is the sphere spectrum $\S$ there exist examples of $\S$-coalgebras that are not isomorphic to suspension spectra. 
For example, in symmetric spectra, given a space $Y$ and a quotient space $Y/B$, one can form a counital coalgebra $C$ with $C_0  = Y_+$, and $C_n = S^n \sm (Y/B)_+$ for $n>0$.  The counit map on level $n$, that is, $S^n \sm (Y/B)_+ \to S^n$, is induced by the map from $Y/B_+\rightarrow S^0$ that sends only the base point to the base point of $S^0$.  
\end{ex}

\begin{ex}\label{ex: smash in W space}
Let $(\W,\Smash, S^0)$ be the category of based spaces homeomorphic to finite CW complexes endowed with the usual smash product of spaces, as described in \cite[Example 4.6]{MMSS}. 
The sphere spectrum $\S: \W\rightarrow \T$ is defined as the strong symmetric monoidal faithful functor induced by inclusion. Recall from \cite[Lemma 4.9]{MMSS} that any $\W$-space has a unique structure of $\W$-spectrum over $\S$. Moreover, for any $\W$-spaces $X$ and $Y$, we have $X\Smash Y\cong X\Smash_\S Y$. We provide here an example where:
\[
X(S^0)\Smash Y(S^0)\ncong (X\Smash Y)(S^0),
\]
for some choice of $\W$-spaces $X$ and $Y$.
Recall from equation (\ref{equ: colimit coend}) that we have: 
\begin{equation}\label{eq: coequalizer in W}
(X\Smash Y)(S^0)=\left.\left(\bigvee_{(K,L)\in \W\times \W} \W(K\Smash L, S^0) \Smash X(K) \Smash Y(L)\right)\right/ \sim,
\end{equation}
where elements $\alpha\sm x_K\sm y_L$ in $\W(K\sm L, S^0)\sm X(K) \sm Y(Y)$ are identified with elements $x_0\sm y_0$ in $X(S^0)\Smash Y(S^0)\cong \W(S^0\sm S^0, S^0)\Smash X(S^0)\sm Y(S^0)$ if and only if either one of the following type of identifications occurs: 
\begin{enumerate}[label=\upshape\textbf{(\arabic*)}]
\item\label{enum 1} there exist maps $f:S^0\rightarrow K$ and $g:S^0\rightarrow L$ in $\W$ such that $X(f)(x_0)=x_K$, $Y(g)(y_0)=y_L$ and the following diagram commutes:
\[
\begin{tikzcd}
S^0\sm S^0 \ar{r}{f\sm g} \ar{dr}[swap]{\cong} & K\sm L \ar{d}{\alpha}\\
& S^0;
\end{tikzcd}
\]

\item\label{enum 2} there exist maps $f:K\rightarrow S^0$ and $g: L\rightarrow S^0$ in $\W$ such that $X(f)(x_K)=x_0$, $Y(g)(y_L)=y_0$ and the following diagram commutes:
\[
\begin{tikzcd}
K\sm L \ar{r}{f\sm g} \ar{dr}[swap]{\alpha} & S^0\sm S^0 \ar{d}{\cong}\\
& S^0;
\end{tikzcd}
\]
\item\label{enum 3} there exist spaces $K'$ and $L'$ in $\W$, together with a map $\beta: K'\sm L'\rightarrow S^0$ in $\W$, such that there exist maps $f:S^0\rightarrow K$, $g:S^0\rightarrow L$, $f':K\rightarrow K'$ and $g':L\rightarrow L'$ in $\W$ where $X(f)(x_0)=X(f')(x_K)$, $Y(g)(y_0)=Y(g')(y_L)$ and the following diagram commutes:
\[
\begin{tikzcd}
S^0\sm S^0 \ar{r}{f\sm g} \ar{dr}[swap]{\cong} & K'\sm L' \ar{d}{\beta} & K\sm L\ar{l}[swap]{f'\sm g'}\ar{dl}{\alpha}\\
& S^0. &
\end{tikzcd}
\]
\end{enumerate}

For any based space $A$, let us defined the infinite symmetric product of $A$, denoted $\SP(A)$, to be the free commutative monoid generated by $A$ in $(\T, \times, *)$, see more details in \cite[Definition 5.2.1]{aguilar}. 
Write $[a_1, a_2,\ldots, a_n]$ for the equivalence class in $\SP(A)$ of the point $(a_1, a_2, \ldots, a_n, *, *, \ldots)$ in $\prod_{n\geq 1} A$. 
This defines a functor $\SP:\T\rightarrow \T$ where a map $f:A\rightarrow B$  induces $\SP(f):\SP(A)\rightarrow \SP(B)$ defined by $f([a_1,\ldots, a_n]) =  [f(a_1), \ldots, f(a_n)]$.
We precompose by the functor $\S:\W\rightarrow \T$ to obtain a based, continuous functor $\SP:\W\rightarrow \T$, which makes the infinite symmetric product $\SP$ into a $\W$-space. 
A standard result of Dold-Thom gives that, for $n\geq 1$, the spaces $\SP(S^n)$ are the Eilenberg-Mac Lane spaces $K(\mathbb{Z}, n)$, see \cite[Proposition 6.1.2]{aguilar}, and thus $\SP$ is the Eilenberg-Mac Lane spectrum $H\mathbb{Z}$ in $\W$-spaces.

We argue here that $(\SP\sm \SP)(S^0)\ncong \SP(S^0)\sm \SP(S^0)$. Let us describe particular elements $\alpha\sm x_K\sm y_L$ in $\W(K\sm L, S^0)\sm \SP(K)\sm \SP(L)$, for particular $K$ and $L$ in $\W$, that are not identified to any element $x_0\sm y_0$ in $\SP(S^0)\sm \SP(S^0)$, via any identifications of the form \ref{enum 1}, \ref{enum 2} and \ref{enum 3} above. 
Let us first describe $K$ and $L$ and the map $\alpha:K\sm L\rightarrow S^0$. For the map $\alpha$ not be to be trivial, we choose $K$ and $L$ disconnected as follows. 
Let $K=\{*, k_1, k_2\}$ and $L=\{*, \ell_1, \ell_2\}$ be discrete spaces.
Then the smash product $K\sm L$ is given by the discrete space $\{ *, k_1\sm \ell_1, k_1\sm \ell_2, k_2\sm \ell_1, k_2\sm \ell_2\}$. We define the map $\alpha:K\sm L\rightarrow S^0$ by $\alpha(*)=0$ and:
\[
\begin{tikzcd}[column sep=small]
\alpha(k_1\sm \ell_1)=0, &\alpha(k_1\sm \ell_2)=1, &\alpha(k_2\sm \ell_1)=1,  & \alpha(k_2\sm \ell_2)=0.
\end{tikzcd}
\]

Given our choice of $\alpha:K\sm L\rightarrow S^0$, the reader can verify that there are no maps $f:K\rightarrow S^0$ and $g:L\rightarrow S^0$ in $\W$ such that they fit in the commutative diagram:
\begin{equation}\label{eq: example of W}
\begin{tikzcd}
K\sm L \ar{r}{f\sm g} \ar{dr}[swap]{\alpha}& S^0\sm S^0\ar{d}{\cong}\\
& S^0 .
\end{tikzcd}
\end{equation}
This shows that for any choice of $x_K$ and $y_L$ in $\SP(K)$ and $\SP(L)$, no element $\alpha\sm x_K\sm y_L$ is  identified with an element $x_0\sm y_0\in \SP(S^0)\sm \SP(S^0)$ in the identification of type \ref{enum 2} as described above. 

Let $x_K=[k_1, k_2]$. Let $y_L$ be any non-basepoint element of $\SP(L)$.
{Any element of $\SP(S^0)$ is of the form $[1,\ldots, 1]$ and, for a map $f:S^0\rightarrow K$, the map $\SP(f)$ sends this element to $[f(1), \ldots, f(1)]$ in $\SP(K)$.} 
Since $k_1\neq k_2$, there is no map $f:S^0\rightarrow K$ such that $\SP(f)(x_0)=x_K$ for some $x_0\in \SP(S^0)$. Thus, identifications of type \ref{enum 1} do not occur on the element $\alpha\sm x_K\sm y_L$ in $\W(K\sm L, S^0)\sm \SP(K)\sm \SP(L)$.

Suppose now there exist objects $K'$ and $L'$ in $\W$, together with maps $f':K\rightarrow K'$, $g':L\rightarrow L'$ and $\beta:K'\sm L'\rightarrow S^0$ in $\W$, such that the following diagram commutes:
\[
\begin{tikzcd}
K\sm L\ar{r}{f'\sm g'}\ar{dr}[swap]{\alpha} & K'\sm L'\ar{d}{\beta}\\
& S^0.
\end{tikzcd}
\]
Since $\alpha$ is non-trivial, it follows that $K'\sm L'$ is disconnected and $\beta$ is non-trivial. Since $K'\sm L'$ is disconnected, if follows that both $K'$ and $L'$ are disconnected as well.
We argue that the images $f'(k_1)$ and $f'(k_2)$ do not lie in the same path-component of $K'$. 
If we suppose they are, then commutativity of the above diagram gives:
\[
0=\alpha(k_1\sm \ell_1)=\beta(f'(k_1)\sm g'(\ell_1))=\beta(f'(k_2)\sm g'(\ell_1))=\alpha(k_2\sm \ell_1)=1,
\]
which is a contradiction. Also, since $\beta(f'(k_2)\sm g'(\ell_1))=\alpha(k_2\sm \ell_1)=1$, it follows that $f'(k_2)$ is not the basepoint of $K'$.
Since $\beta(f'(k_1)\sm g'(\ell_2))=\alpha(k_1\sm \ell_2)=1$, also $f'(k_1)$ is not the basepoint.
Thus the two non-basepoints $f'(k_1)\neq f'(k_2)$ in $K'$ are distinct. Whence, there are no elements $x_0\sm y_0\in \SP(S^0)\sm \SP(S^0)$ such that identifications of type \ref{enum 3} occur with the element $\alpha\sm x_K\sm y_L$.

Therefore, we have presented a non-trivial element $\alpha\sm x_K\sm y_L\in \W(K\sm L, S^0)\sm \SP(S^0)\sm \SP(S^0)$ in the summand $(\SP\sm \SP)(S^0)$ of the equation (\ref{eq: coequalizer in W}) that is not identified with any element in $\SP(S^0)\sm \SP(S^0)$. Thus $(\SP\sm \SP)(S^0)\ncong \SP(S^0)\sm \SP(S^0)$.
\end{ex}

\begin{rem}
In Example \ref{ex: smash in W space}, we purposefully have chosen a $\W$-space $X$ where the structure maps:
\[
K\sm X(S^0)\rightarrow X(K\sm S^0)\cong X(K),
\]
are not epimorphisms. However, if $X$ is an $\S$-coalgebra in $\W$-spaces, Lemma \ref{lem: epimorphism} shows that such non-epimorphic structure maps are not possible. 
\end{rem}

\begin{ex}\label{ex: smash in gamma}
Recall, from \cite[Example 4.8]{MMSS}, that if we choose $\D$ to be the category of finite based sets $n_+ = \{0,1,... ,n\}$ and all based maps, where 0 is the basepoint, then a $\D$-space is a $\Gamma$-space as in \cite{Segal}.
As in $\W$-spaces, here the sphere spectrum $\S: \D\rightarrow \T$ is defined as the strong symmetric monoidal faithful functor induced by inclusion (endowing the finite sets with the discrete topology). Recall from \cite[Lemma 4.9]{MMSS} that we have $X\sm_\S Y\cong X\sm Y$ for any $\Gamma$-spaces $X$ and $Y$, as the action of the sphere spectrum $\S$ provides no additional data.
As in Example \ref{ex: smash in W space}, the $\Gamma$-space model of the Eilenberg-Mac Lane spectrum $H\mathbb{Z}$ provides an example of a $\Gamma$-space $X$ where $(X\sm X)(1_+)\ncong X(1_+)\sm X(1_+)$. The proof is similar to Example \ref{ex: smash in W space}.
\end{ex}

\section{Coalgebras in EKMM-Spectra}\label{sec: ekmm}

We prove here (in Theorem \ref{thm: ekmm coco} below) a result similar to Theorem \ref{thm: diag spectra coco} for EKMM-spectra using the same strategy from Theorem \ref{thm: prethm}. We first investigate the definition of the smash product in this case.

Let us set notation and recall the definitions. All missing details can be found in \cite{EKMM}. 
Let $\mathcal{L}$ denote the category whose objects are universes and whose morphisms are linear isometries.
We fix $\mathcal{U}$ a universe, that is, a countable dimensional real inner product space.  We say $X$ is a spectrum indexed on $\mathcal{U}$ if we are given a collection $X_V$ of pointed topological spaces for each $V\subseteq \mathcal{U}$ a finite dimensional subspace, together with structure maps:
\[
\sigma_{V,W}:\Sigma^{W-V}X_V\longrightarrow X_W,
\]
whenever $V\subseteq W\subseteq \mathcal{U}$, such that the adjoint of each $\sigma_{V,W}$ is a homeomorphism. Given two spectra $X$ and $Y$ indexed on $\mathcal{U}$, their external smash product $X\overline{\wedge}Y$ is a spectrum indexed on $\mathcal{U}^{\oplus2}$ defined by:
\[
(X\overline{\wedge}Y)_{V}=X_{V_1}\wedge Y_{V_2},
\]
for any finite dimensional subspace $V=V_1\oplus V_2\subseteq \mathcal{U}^{\oplus2}$.
Let $\mathcal{L}(n)=\mathcal{L}(\mathcal{U}^{\oplus^n}, \mathcal{U})$ for all $n\in\mathbb{N}$. 
We say $X$ is an $\mathbb{L}$-spectrum if the spectrum $X$ is endowed with an action $\alpha:\mathcal{L}(1)\ltimes X \longrightarrow X$ (see \cite[Chapter I, Definition 4.2]{EKMM}). If $X$ and $Y$ are $\mathbb{L}$-spectra, their operadic smash product $X\Smash_{\mathcal{L}} Y$ is the coequalizer:
\begin{equation}\label{equ: operad act}
\begin{tikzcd}[column sep=large]
\Big(\mathcal{L}(2)\times \mathcal{L}(1) \times \mathcal{L}(1) \Big)\ltimes (X\overline{\wedge}Y)  \ar[shift left]{r}{\gamma \ltimes \mathsf{id}_{X\overline{\wedge}Y}}\ar[shift right]{r}[swap]{{\mathsf{id}_{\mathcal{L}(2)}\ltimes (\alpha_X\overline{\Smash}\alpha_Y)}} & \mathcal{L}(2) \ltimes X\overline{\wedge}Y,
\end{tikzcd}
\end{equation}
where $\gamma:\mathcal{L}(2)\times \mathcal{L}(1) \times \mathcal{L}(1) \rightarrow \mathcal{L}(2)$ is the map defined by: $(\theta, \varphi, \psi) \longmapsto \theta \circ (\varphi\oplus \psi)$,
and $\mathsf{id}_{\mathcal{L}(2)}\ltimes (\alpha_X\overline{\Smash}\alpha_Y)$ is defined via the isomorphism (see \cite[Chapter I, Proposition 2.2(ii)]{EKMM}):
\[
\Big(\mathcal{L}(2)\times \mathcal{L}(1)\times \mathcal{L}(1)\Big) \ltimes X\overline{\wedge}Y \cong \mathcal{L}(2) \ltimes\Big( (\mathcal{L}(1) \times \mathcal{L}(1)) \ltimes (X\overline{\wedge} Y)\Big),
\]
and $\alpha_X\overline{\Smash}\alpha_Y$ is the induced map: 
\[
\mathcal{L}(1)\times \mathcal{L}(1)\ltimes (X\overline{\wedge}Y) \cong (\mathcal{L}(1)\ltimes X) \overline{\wedge} (\mathcal{L}(1)\ltimes Y) \stackrel{\alpha_X\overline{\Smash}\alpha_Y}\longrightarrow X\overline{\wedge}Y.
\]
A definition of the twisted half-smash product $\mathcal{L}(2) \ltimes X\overline{\wedge}Y$ can be found in \cite[Definition 5.1]{Cole}. Let us make the construction explicit.
For each $V\subseteq \mathcal{U}^{\oplus2}$, define a Thom spectrum $\mathcal{M}(V)$ indexed on $\mathcal{U}$ so that, for any $V\subseteq W \subseteq \mathcal{U}^{\oplus2}$ we have an isomorphism of spectra indexed on $\mathcal{U}$: 
\[
\Sigma^{W-V} \mathcal{M}(W) \stackrel{\cong}\longrightarrow \mathcal{M}(V),
\]
see \cite[Proposition 4.3]{Cole}. Whenever the dimension of $U\subseteq \mathcal{U}$ is strictly smaller than the dimension of $V\subseteq \mathcal{U}^{\oplus2}$, the space $\mathcal{M}(V)_U$ is just a point (see beginning of \cite[Section 4]{Cole}).
The twisted half-smash product $\mathcal{L}(2)\ltimes (X\overline{\wedge} Y)$ is defined to be the colimit (see  \cite[Definition 3.5]{Cole}) in the category of spectra indexed over $\mathcal{U}$:
\begin{equation}\label{equ: half-smash product}
\mathsf{colim}_{V\subseteq \mathcal{U}^{\oplus2}}\Big(\mathcal{M}(V) \wedge {(X\overline{\wedge}Y)}_V\Big),
\end{equation}
where the colimit is taken over the maps: 
\begin{eqnarray*}
\mathcal{M}(V) \wedge {(X\overline{\wedge}Y)}_V & \cong & \Sigma^{W-V} \mathcal{M}(W) \wedge {(X\overline{\wedge}Y)}_V \\
& \cong & \mathcal{M}(W)\wedge \Sigma^{W-V}{(X\overline{\wedge}Y)}_V \\
& \rightarrow & \mathcal{M}(W) \wedge {(X\overline{\wedge} Y)}_W
\end{eqnarray*}
for $V\subseteq W \subseteq \mathcal{U}^{\oplus 2}$, finite dimensional subspaces. This amounts to saying that, for any finite dimensional subspace $U\subseteq \mathcal{U}$, the smash product  $(X\wedge_{\mathcal{L}}Y)_U$ can be regarded as a quotient of the following space: 
\begin{equation*}
\left.{\left( \bigvee_{\substack{V=V_1\oplus V_2\subseteq \mathcal{U}^{\oplus2}\\\text{finite dimensional}}} \mathcal{M}(V)_U \wedge X_{V_1}\wedge Y_{V_2}\right)}\right/{\sim}.
\end{equation*}
Since $\mathcal{M}(V)_U$ is a point whenever the dimension of $V$ is bigger than $U$ and that $\mathcal{M}(0)_0\cong S^0$, we obtain:
\begin{equation}\label{equ: smash of sphere in ekmm}
(X\Smash_\mathcal{L} Y)_0\cong X_0\Smash Y_0.
\end{equation}

Recall from \cite[Chapter III, Definition 1.1]{EKMM} that an $\mathbb{S}$-module $X$ is an $\mathbb{L}$-spectrum such that the natural map $\mathbb{S}\wedge_\mathcal{L}X \rightarrow X$ is an isomorphism. The smash product of $\S$-modules $X$ and $Y$ is defined as $X\Smash_\S Y=X\Smash_\mathcal{L}Y$ (see \cite[Chapter II, Definition 1.1]{EKMM}). 
We denote the resulting symmetric monoidal category by $\ekmm$.
Similarly as in previous sections, given a commutative monoid $R$ in $\ekmm$ (i.e. a commutative $\S$-algebra), we define the smash product $X \Smash_R Y$ of two $R$-modules $X$ and $Y$ as the coequalizer:
\begin{equation}\label{eqy: R-smash in ekmm}
\begin{tikzcd}[column sep=huge]
 X\Smash_\S R\Smash_\S Y \ar[shift left]{r}{\id_X\Smash \sigma}\ar[shift right]{r}[swap]{(\sigma\circ \tau) \Smash \id_Y} & X\Smash_\S Y.\end{tikzcd}
\end{equation}
See \cite[Chapter III, Definition 3.1]{EKMM}.

\begin{thm}\label{thm: ekmm coco}
Let $R$ be a commutative $\S$-algebra in $\ekmm$ such that $R_0\cong S^0$. Let $(C,\Delta,\varepsilon)$ be an $R$-coalgebra in $\ekmm$. Then $C$ is cocommutative.
\end{thm}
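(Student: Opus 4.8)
The plan is to apply the categorical criterion of Theorem \ref{thm: prethm} to the free-module/evaluation adjunction
\[
\begin{tikzcd}
R\Smash_\S - : \T \ar[shift left]{r} & \ekmm_R : \Ev_0, \ar[shift left]{l}
\end{tikzcd}
\]
where $\ekmm_R = \Mod_R(\ekmm)$, the right adjoint $\Ev_0$ sends an $R$-module $X$ to the based space $X_0$, and the left adjoint is the free $R$-module functor on a space (paralleling the notation of the diagram-spectra section). Exactly as in the proof of Theorem \ref{thm: diag spectra coco}, condition \ref{cond2 of thm} holds by Lemma \ref{lem: coalg in set}, and the unit half of condition \ref{cond1 of thm}, namely $S^0 \to \Ev_0(R) = R_0$, is an isomorphism by the hypothesis $R_0 \cong S^0$. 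The remaining work is to check that $\Ev_0$ is lax symmetric monoidal and to establish the EKMM analogues of Lemma \ref{lem: epimorphism} (condition \ref{cond3 of thm}) and Lemma \ref{lem: smash of sphere coalgebra} (the smash half of condition \ref{cond1 of thm}). The advantage here is that equation (\ref{equ: smash of sphere in ekmm}), $(X \Smash_\S Y)_0 \cong X_0 \Smash Y_0$, is available directly from the point-set description of the operadic smash product, so evaluation at $0$ is already strong symmetric monoidal on $(\ekmm, \Smash_\S)$; composing with the canonical quotient $X \Smash_\S Y \to X \Smash_R Y$ of (\ref{eqy: R-smash in ekmm}) supplies the lax structure map $X_0 \Smash Y_0 \cong (X\Smash_\S Y)_0 \to (X \Smash_R Y)_0$ for $\Ev_0 : \ekmm_R \to \T$.

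For condition \ref{cond3 of thm} I would prove the EKMM analogue of Lemma \ref{lem: epimorphism}: the natural counit map $R \Smash_\S C_0 \to C$ is an epimorphism of $R$-modules. Since morphisms of $\S$-modules are determined levelwise, a levelwise surjection is an epimorphism, so it suffices to show the structure map is surjective on each space $(\,\cdot\,)_U$, for $U \subseteq \mathcal{U}$ finite dimensional. The argument mirrors Lemma \ref{lem: epimorphism}, now using the explicit quotient presentation of $(C \Smash_R C)_U$ coming from the twisted half-smash product (\ref{equ: half-smash product}): because $\mathcal{M}(V)_U$ is a point once $\dim V > \dim U$ and $\mathcal{M}(0)_0 \cong S^0$, the summands indexed by the isomorphisms $U \oplus 0 \cong U$ and $0 \oplus U \cong U$ furnish distinguished copies of $C_U \Smash C_0$ and $C_0 \Smash C_U$ inside $(C \Smash_R C)_U$, playing the role of $C(d)\Smash C(0)$ and $C(0)\Smash C(d)$. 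Counitality of $C$ then runs as before: if some $c \in C_U$ were not in the image of the structure map, the preimages of $c$ under $\id_C \Smash \varepsilon$ and $\varepsilon \Smash \id_C$ would lie in these two disjoint copies, and the counit diagram (the EKMM analogue of (\ref{diag: counital C smash C})) would force two distinct values of $\Delta(c)$, a contradiction.

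For the smash half of condition \ref{cond1 of thm} I would prove the EKMM analogue of Lemma \ref{lem: smash of sphere coalgebra}: $C_0 \Smash C_0 \cong (C \Smash_R C)_0$. Here the argument is cleaner than in the diagram-spectra case because (\ref{equ: smash of sphere in ekmm}) already gives $(C \Smash_\S C)_0 \cong C_0 \Smash C_0$; the coequalizer (\ref{eqy: R-smash in ekmm}) then exhibits $(C \Smash_R C)_0$ as a quotient, so the lax structure map $C_0 \Smash C_0 \to (C \Smash_R C)_0$ is surjective. Injectivity follows exactly as in Lemma \ref{lem: smash of sphere coalgebra}: the counital maps $\id_C \Smash \varepsilon$ and $\varepsilon \Smash \id_C$ assemble into a retraction $(C \Smash_R C)_0 \to C_0 \Smash C_0$ whose composite with the structure map is the identity. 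With conditions \ref{cond1 of thm}, \ref{cond2 of thm} and \ref{cond3 of thm} all in hand, Theorem \ref{thm: prethm} concludes that every $R$-coalgebra $C$ in $\ekmm$ is cocommutative.

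I expect the main obstacle to be condition \ref{cond3 of thm}, the epimorphism statement. Unlike the coend formula (\ref{equ: colimit coend}) used for diagram spectra, the operadic smash product is presented through the twisted half-smash product and its Thom spectra $\mathcal{M}(V)$, so the bookkeeping needed to isolate the distinguished summands $C_U \Smash C_0$ and $C_0 \Smash C_U$ and to verify that they are not prematurely identified under the colimit over $V \subseteq \mathcal{U}^{\oplus 2}$ and the $\mathbb{L}$-action is more delicate. Pinning down this level-$U$ quotient is precisely where EKMM-spectra ``need particular care''; once it is understood, the counitality argument transfers essentially verbatim from the proof of Lemma \ref{lem: epimorphism}.
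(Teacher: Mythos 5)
Your proposal is correct and follows essentially the same route as the paper: the same adjunction $R\Smash -\dashv \Ev_0$, Theorem \ref{thm: prethm}, Lemma \ref{lem: coalg in set} for condition \ref{cond2 of thm}, and an EKMM analogue of Lemma \ref{lem: epimorphism} (the paper's Lemma \ref{lem: epi in ekmm}) proved by exactly the counitality/disjoint-preimage argument you outline, using that $\mathcal{M}(V)_U$ is a point when $\dim V>\dim U$. The only (harmless) divergence is condition \ref{cond1 of thm}: the paper simply observes that equation (\ref{equ: smash of sphere in ekmm}) together with $R_0\cong S^0$ already makes $\Ev_0$ strong symmetric monoidal on $\Mod_R(\ekmm)$, so your coalgebra-specific surjectivity-plus-counit-retraction argument for $C_0\Smash C_0\cong (C\Smash_R C)_0$, while valid, is more than is needed.
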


\begin{proof}
We apply Theorem \ref{thm: prethm} to the pair 
$\begin{tikzcd}
R\Smash -: \T \ar[shift left]{r} & \Mod_R(\ekmm):\Ev_0.\ar[shift left]{l}
\end{tikzcd}$ 
Equation (\ref{equ: smash of sphere in ekmm}) shows that $\Ev_0$ is a strong symmetric monoidal functor, and thus proves condition \ref{cond1 of thm}. Lemma \ref{lem: coalg in set}  shows \ref{cond2 of thm}. We conclude using Lemma \ref{lem: epi in ekmm} below and \ref{cond3 of thm} of Theorem \ref{thm: prethm}.
\end{proof}

\begin{lem}\label{lem: epi in ekmm}
Let $R$ be a commutative $\S$-algebra in $\ekmm$. Let $(C,\Delta, \varepsilon)$ be an $R$-coalgebra in $\ekmm$. Then the natural map $\sigma: R\Smash C_0\longrightarrow C$ is an epimorphism of $R$-modules in $\ekmm$.
\end{lem}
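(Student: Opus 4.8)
The plan is to mirror the proof of Lemma~\ref{lem: epimorphism}, substituting the explicit description of the operadic smash product from~(\ref{equ: half-smash product}) for the point-set input (Lemma~\ref{lem: smash in simp set} together with Properties~\ref{property1}--\ref{property2}) used in the diagram-spectra case. First I would observe that a map of $R$-modules in $\ekmm$ is an epimorphism as soon as it is a surjection of based spaces at every finite dimensional $U\subseteq\mathcal{U}$, since the levelwise forgetful functor to collections of spaces is faithful. So it suffices to show that $\sigma\colon (R\Smash C_0)_U\to C_U$ is surjective for each finite dimensional $U\subseteq\mathcal{U}$. For $U=0$ this is immediate, exactly as in the case $d=0$ of Lemma~\ref{lem: epimorphism}: using $(R\Smash C_0)_0\cong R_0\Smash C_0$ from~(\ref{equ: smash of sphere in ekmm}), the unit axiom of the $R$-action makes the composite $C_0\cong S^0\Smash C_0\to R_0\Smash C_0\xrightarrow{\sigma}C_0$ the identity, so $\sigma$ is surjective at level $0$.

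Fix now $U\neq 0$. I would write $(C\Smash_\S C)_U=(C\Smash_{\mathcal{L}}C)_U$ as the quotient of $\bigvee_{V=V_1\oplus V_2}\mathcal{M}(V)_U\Smash C_{V_1}\Smash C_{V_2}$ arising from~(\ref{equ: half-smash product}) and the operad coequalizer~(\ref{equ: operad act}), and then pass to the further $R$-coequalizer~(\ref{eqy: R-smash in ekmm}) to obtain $(C\Smash_R C)_U$. The isometric isomorphisms $U\oplus 0\cong U$ and $0\oplus U\cong U$ single out two distinguished copies, namely $C_U\Smash C_0$ coming from $V_1=U,\ V_2=0$ and $C_0\Smash C_U$ coming from $V_1=0,\ V_2=U$; these are the EKMM analogues of the copies $C(d)\Smash C(0)$ and $C(0)\Smash C(d)$ in Lemma~\ref{lem: epimorphism}.

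The key step, which I expect to be the main obstacle, is to show that these two copies are \emph{not} identified with one another in $(C\Smash_\S C)_U$, before the $R$-coequalizer is applied. Here I would exploit that $\mathcal{M}(V)_U$ is a single point whenever $\dim V>\dim U$, the isomorphisms $\Sigma^{W-V}\mathcal{M}(W)\cong\mathcal{M}(V)$, and $\mathcal{M}(0)_0\cong S^0$, together with the linear isometries action defining~(\ref{equ: operad act}), to verify that no chain of colimit maps and operadic identifications can carry a non-basepoint element of the $V=U\oplus 0$ summand to one of the $V=0\oplus U$ summand. Unlike the diagram-spectra setting, where the relevant identifications were governed by a finite point-set diagram and dispatched by Lemma~\ref{lem: smash in simp set}, the identifications here are controlled by the twisted half-smash product and the Thom spectra $\mathcal{M}(V)$, so the bookkeeping is more delicate and is the genuinely new content of the EKMM case.

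With the non-identification in $C\Smash_\S C$ in hand, the argument concludes exactly as in Lemma~\ref{lem: epimorphism}. Counitality provides retractions $\id_C\Smash\varepsilon\colon C\Smash_R C\to C\Smash_R R\cong C$ and $\varepsilon\Smash\id_C\colon C\Smash_R C\to R\Smash_R C\cong C$ with $(\id_C\Smash\varepsilon)\circ\Delta=(\varepsilon\Smash\id_C)\circ\Delta=\id_C$. Suppose $\sigma_U$ missed some $c\in C_U$. Representing $c$ through the two unit isomorphisms, the preimage $(\id_C\Smash\varepsilon)^{-1}(c)$ lands in the copy $C_U\Smash C_0$ and $(\varepsilon\Smash\id_C)^{-1}(c)$ lands in the copy $C_0\Smash C_U$; although the $R$-coequalizer~(\ref{eqy: R-smash in ekmm}) can glue these copies, it does so only along elements in the image of $\sigma$, so for $c\notin\operatorname{im}\sigma_U$ the two preimages remain disjoint in $(C\Smash_R C)_U$. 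The commutativity of the counitality diagram would then force two distinct values of $\Delta(c)$, a contradiction. Hence $\sigma_U$ is surjective for every $U$, and $\sigma\colon R\Smash C_0\to C$ is an epimorphism of $R$-modules in $\ekmm$.
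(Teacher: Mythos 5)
Your overall architecture matches the paper's proof: reduce to levelwise surjectivity, locate the two copies $C_U\Smash C_0$ and $C_0\Smash C_U$ in the explicit presentation of $(C\Smash_R C)_U$, and derive a contradiction with counitality from the disjointness of the two preimages of a hypothetical $c\notin\operatorname{im}\sigma$. However, your designated key step is not just unproven but false. You claim that no chain of colimit maps and operadic identifications can carry a non-basepoint element of the $V=U\oplus 0$ summand to the $V=0\oplus U$ summand inside $(C\Smash_{\S}C)_U$. The operadic coequalizer (\ref{equ: operad act}) indeed does not mix the two copies (the paper states exactly this), but the colimit (\ref{equ: half-smash product}) does: the $V=0$ summand $\mathcal{M}(0)_U\Smash C_0\Smash C_0$ maps into the $V=U\oplus 0$ summand via the isomorphism $\mathcal{M}(0)_U\cong\mathcal{M}(U\oplus 0)_U\Smash S^{U}$ followed by the structure map $\sigma_{0,U}\colon S^U\Smash C_0\to C_U$ on the first tensor factor, and into the $V=0\oplus U$ summand by the symmetric map on the second factor; in the colimit the two images of any element of the $V=0$ summand are identified with each other. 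Thus every element of $C_U\Smash C_0$ in the image of $\sigma_{0,U}\Smash\id_{C_0}$ \emph{is} glued to an element of $C_0\Smash C_U$ already in $C\Smash_{\S}C$, before any $R$-coequalizer is applied (for $C=\S$ the structure maps are homeomorphisms, so the two copies become entirely identified, consistent with $\S\Smash_\S\S\cong\S$). Consequently your concluding paragraph, which attributes all gluing between the two copies to the $R$-coequalizer (\ref{eqy: R-smash in ekmm}) and only along $\operatorname{im}\sigma$, rests on a false premise.

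The missing idea, which is the actual content of the paper's proof, is that the gluing internal to $C\Smash_{\S}C$ is \emph{also} controlled by the image of $\sigma$: by the unit condition of the $R$-module structure on $C$, the spectrum structure map factors as $S^U\Smash C_0\to R_U\Smash C_0\xrightarrow{\ \sigma\ }C_U$, so $c\notin\operatorname{im}(\sigma\colon R_U\Smash C_0\to C_U)$ forces $c\notin\operatorname{im}\sigma_{0,U}$ as well. With this factorization in hand, neither the colimit identifications (which only involve elements hit by $\sigma_{0,U}$) nor the $R$-coequalizer identifications (which only involve elements hit by the $R$-action) can merge the two preimages $(\id_C\Smash\varepsilon)^{-1}(c)$ and $(\varepsilon\Smash\id_C)^{-1}(c)$, and the counitality contradiction goes through. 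Once you replace your non-identification claim by this factorization argument, your proof coincides with the paper's, which carries out this bookkeeping explicitly for $\dim U=1$ and asserts that the higher-dimensional cases are similar.
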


\begin{proof}
As we have recalled above, given any finite dimensional subspace $U\subseteq \mathcal{U}$, the smash product $(C\Smash_R C)_U$ can be regarded as: 
\begin{equation}\label{eq:ekmm}
(C\wedge_{R}C)_U=\left.{\left( \bigvee_{\substack{V=V_1\oplus V_2\subseteq \mathcal{U}^{\oplus2}\\ \text{finite dimensional}}} \mathcal{M}(V)_U \wedge C_{V_1}\wedge C_{V_2}\right)}\right/{{\sim_R}}.
\end{equation}
If $\dim(U)=1$ in equation (\ref{eq:ekmm}), we get that $(C\wedge_R C)_U$ is simply: 
\[
\left.{\left( \Big(\mathcal{M}(U\oplus 0)_U\wedge C_U \wedge C_0\Big) \vee \Big(\mathcal{M}(0)_U\wedge C_0\wedge C_0\Big) \vee\Big(\mathcal{M}(0\oplus U)_U\wedge C_0 \wedge C_U\Big)\right)}\right/{{\sim_R }.}
\]
Denote the action from the $\mathbb{L}$-structure of $C$ by $\alpha:\mathcal{L}(1)\ltimes C \longrightarrow C.$ Elements in $C_U \wedge C_0$ are not identified with elements in $C_0\wedge C_U$ in the coequalizer (\ref{equ: operad act}) via the the map $\alpha\overline{\Smash}\alpha$.
However, elements of $C_U\wedge C_0$ can be identified with elements of $C_0\wedge C_U$ under the $R$-action $R_U\Smash C_0\rightarrow C_U$ {(in the coequalizer (\ref{eqy: R-smash in ekmm}))} and the structure map $\sigma_{0,U}$ in $(C\wedge_R C)_U$ {(in the colimit (\ref{equ: half-smash product}))}. Notice the similarity with the equation (\ref{equ C wedge C in diag}) in previous section. We can argue similarly. Suppose the map $R_U\Smash C_0\rightarrow C_U$ is not surjective. Since the composite:
\[
\begin{tikzcd}
S^U\Smash C_0 \ar{r} & R_U\Smash C_0 \ar{r} & C_U,
\end{tikzcd}
\]
is the structure map $\sigma_{0,U}$, then $\sigma_{0,U}$ must also not be surjective.
Let $c$ be an element in $C_U$ not in the image of $R_U\Smash C_0\rightarrow C_U$. 
Notice that $c$ is also not in the image of $\sigma_{0,U}$.
Its preimage under the map $\varepsilon \wedge \mathsf{id}_C$ lies in $C_0\Smash C_U$ but its preimage under the map $\mathsf{id}_C  \Smash \varepsilon$ lies in $C_U\Smash C_0$. 
Since $c$ is not in the image of $\sigma_{0,U}$, elements in the two preimages ${(\mathsf{id}_C\wedge \varepsilon)}^{-1}(c)$ and ${(\varepsilon\wedge \id_C)}^{-1}(c)$ are not identified in the colimit (\ref{equ: half-smash product}). 
Similarly, since $c$ is not in the image of $R_U\Smash C_0\rightarrow C_U$, the elements in the two preimages are not identified in the coequalizer (\ref{eqy: R-smash in ekmm}).
Thus, the two preimages ${(\mathsf{id}_C\wedge \varepsilon)}^{-1}(c)$ and ${(\varepsilon\wedge \id_C)}^{-1}(c)$ are disjoint in $(C\Smash_R C)_U$. 
But we then get a contradiction with the commutativity of the diagram:
\[
\begin{tikzcd}
C\Smash_R C \ar{r}{\mathsf{id}_C\Smash \varepsilon} & C\Smash_R R \cong C \cong R\Smash_R C & C\Smash_R C \ar{l}[swap]{\varepsilon \Smash \mathsf{id}_C}\\
& C.\ar[equals]{u}\ar[bend left]{ul}{\Delta} \ar[bend right]{ur}[swap]{\Delta} &
\end{tikzcd}
\]
The higher dimensional cases are done similarly, so the map $R\Smash C_0\longrightarrow C$ is an epimorphism.
\end{proof}

\nocite{*}
\bibliographystyle{amsalpha}
\providecommand{\bysame}{\leavevmode\hbox to3em{\hrulefill}\thinspace}
\providecommand{\MR}{\relax\ifhmode\unskip\space\fi MR }
\providecommand{\MRhref}[2]{%
  \href{http://www.ams.org/mathscinet-getitem?mr=#1}{#2}
}
\providecommand{\href}[2]{#2}

\end{document}